\newif\ifpictures
\numberwithin{equation}{section}
\newtheorem{thm}{Theorem}
\newtheorem{prop}[thm]{Proposition}
\newtheorem{lemma}[thm]{Lemma}
\newtheorem{cor}[thm]{Corollary}
\theoremstyle{definition}
\newtheorem{example}[thm]{Example}
\newtheorem{remark1}[thm]{Remark}
\newtheorem{openproblem1}[thm]{Open problem}
\newtheorem{definition}[thm]{Definition}
\newenvironment{rem}{\begin{remark1}\rm}{\end{remark1}}
\numberwithin{thm}{section}
\newcounter{FNC}[page]
\def\newfootnote#1{{\addtocounter{FNC}{2}$^\fnsymbol{FNC}$%
		\let\thefootnote\relax\footnotetext{$^\fnsymbol{FNC}$#1}}}
\newcommand{\N}{\mathbb{N}}
\newcommand{\Q}{\mathbb{Q}}
\newcommand{\R}{\mathbb{R}}
\newcommand{\Scal}{\mathcal{S}}
\newcommand{\sonc}{\mathrm{SONC}}
\newcommand{\sage}{\mathrm{SAGE}}
\newcommand{\xb}{\mathbf{x}}
\newcommand{\yb}{\mathbf{y}}
\newcommand{\wb}{\mathbf{w}}
\newcommand{\vb}{\mathbf{v}}
\newcommand{\bb}{\mathbf{b}}
\newcommand{\cb}{\mathbf{c}}
\newcommand{\db}{\mathbf{d}}
\newcommand{\ub}{\mathbf{u}}
\newcommand{\cA}{\mathcal{A}}
\newcommand{\cB}{\mathcal{B}}
\newcommand{\Sc}{\mathcal{S}}
\newcommand{\CS}{C_\Sc}
\newcommand{\Po}[1]{P^{\mathrm{odd}}_{#1}}
\newcommand{\Pe}[1]{P^{\mathrm{even}}_{#1}}
\DeclareMathOperator{\conv}{conv}
\DeclareMathOperator{\cone}{cone}
\DeclareMathOperator{\relinter}{relint}
\newcommand{\set}[1]{\{#1\}}
\title{The $\Sc$-cone and a primal-dual view on second-order representability}
\author{Helen Naumann}
\author{Thorsten Theobald}
\address{Helen Naumann, Thorsten Theobald:
Goethe-Universit\"at, FB 12 -- Institut f\"ur Mathematik,
Postfach 11 19 32, D--60054 Frankfurt am Main, Germany}
\email{\{naumann,theobald\}@math.uni-frankfurt.de}
\date{\today}
\begin{document}
	\begin{abstract}	
The $\mathcal{S}$-cone provides a common framework for cones of polynomials
or exponential sums which establish non-negativity upon the arithmetic-geometric inequality, in particular for sums of non-negative circuit polynomials (SONC) or
sums of arithmetic-geometric exponentials (SAGE).
In this paper, we study the $\mathcal{S}$-cone and its dual from the viewpoint of second-order representability. Extending 
results of Averkov and of Wang and Magron on the primal SONC 
cone, we provide explicit generalized second-order descriptions for 
rational $\mathcal{S}$-cones and their duals.
\end{abstract}
	
	\maketitle
	
\section{Introduction}
The question to characterize and to decide whether a polynomial or an exponential 
sum is non-negative occurs in many branches of mathematics and application
areas.
In the development of real algebraic geometry, the connection between
the cone of non-negative polynomials and the cone of sums of squares of 
polynomials plays a prominent role (see, for example, 
\cite{bcr-98,marshall-book,prestel-delzell-2001}). 
If a polynomial
can be written as a sum of squares of polynomials, this provides a certificate
for the non-negativity of the polynomial.
Since the beginning of the current millennium, non-negativity certificates of polynomials
have also seen much interest from the computational point of view and have
strongly advanced the rich connections between real and convex
algebraic geometry as well as polynomial optimization 
(see, for example, \cite{lasserre-book,laurent-2009-survey}).

 Within the research activities on non-negativity certificates in the last years,
 the cones of sums of arithmetic-geometric exponentials
 (SAGE, introduced by Chandrasekaran and Shah \cite{chandrasekaran-shah-2016}) 
 and sums of non-negative circuit polynomials (SONC, introduced by Iliman
 and de Wolff \cite{iliman-dewolff-resmathsci}) have received a lot of attention
 (see, e.g., 
 \cite{averkov-2019,DKdW,forsgard-de-wolff-2019,mcw-2018,
   murray-partial-dual,wang-2018}).
 These cones
 build upon earlier work of Reznick \cite{reznick-1989}. They
 provide non-negativity certificates based on the arithmetic-geometric
 inequality and are particularly useful in the context of sparse polynomials.

 In \cite{knt-2019}, the authors of the current paper and Katth\"an have 
 introduced a common generalization, called the $\mathcal{S}$-cone, which facilitates 
 to study the SAGE cone and the SONC cone within a uniform generalized setting.
 Formally, for two finite disjoint sets $\emptyset \neq \cA \subseteq \R^n, \cB \subseteq \N^n\setminus(2\N)^n$, let $\R[\cA, \cB]$
  denote the space of all functions $f:\R^n \to \R \cup \{\infty\}$ of the form
 \begin{equation}
 \label{eq:generalfunction}
   f(\mathbf{x}) = \sum_{\alpha \in \mathcal{A}} c_{\alpha} |\mathbf{x}|^{\alpha}
     + \sum_{\beta \in \mathcal{B}} c_{\beta} \mathbf{x}^{\beta}\in\R[\cA,\cB]
 \end{equation}
 with real coefficients $c_{\alpha}$, $\alpha \in \mathcal{A}\cup\cB$. 
Our precondition $\mathcal{A} \cap \mathcal{B} \neq \emptyset$ is a 
slight restriction to the setup in \cite{knt-2019}, in order to enable
a little more convenient notation.

One motivation for the class of functions~\eqref{eq:generalfunction} is that it
allows to capture non-negativity of polynomials on $\R^n$ and non-negativity of
polynomials on the non-negative orthant $\R_+^n$ within a uniform setting. 
Moreover, global non-negativity of the summand $\sum_{\alpha \in \cA} 
c_{\alpha} |\mathbf{x}|^{\alpha}$ is equivalent 
to global non-negativity of the exponential sum 
$\yb \mapsto \sum_{\alpha \in \mathcal{A}} c_{\alpha} \exp(\alpha^T \yb)$.

 \begin{definition}
 	A function $f$ of the form~\eqref{eq:generalfunction} is called
 		an \emph{even AG function} if for at most one $\alpha\in\cA$, $c_\alpha$ is negative and for all $\beta\in\cB$, $c_\beta$ is zero; and it is called
 		an \emph{odd AG function} if for all $\alpha\in\cA$, $c_\alpha$ is non-negative and for at most one $\beta\in\cB$, $c_\beta$ is nonzero.
 		
 	$f$ is called an \emph{AG function} (\emph{arithmetic-geometric mean function}), 
 	if $f$ is an even AG function or
 	an odd AG function.
 \end{definition}

 \begin{definition}\label{de:scone}
 	Let $\emptyset \neq \cA \subseteq \R^n, \cB \subseteq \N^n\setminus(2\N)^n$ be finite disjoint sets. The \emph{$\Sc$-cone} $\CS(\cA, \cB)$ is defined as
 	\[ \CS(\cA, \cB) := \cone \left\{ f\in \R[\cA, \cB] \, : \, f \text{ is a non-negative AG function} \right\},
 	\]
 	where $\cone$ denotes the conic (or positive) hull. $\CS(\cA,\cB)$ is
 	called \emph{rational} if $\mathcal{A} \subseteq \Q^n$. 
 \end{definition}

The SAGE and SONC cones arise as special cases of this cone, see 
Section~\ref{se:prelim}.

Both from the geometric and from the optimization point of view, it is
of prominent interest to understand how the different classes of cones relate 
to each other and whether techniques for different cones can be fruitfully
combined. In \cite{karaca-2017}, Karaca, Darivianakis et al.\ have studied 
non-negativity certificates based on a
combination of the SAGE cone with the cone of sums of squares.
Concerning relations between the various cones, 
Averkov has shown that the SONC cone can be represented as a projection of
a spectrahedron \cite{averkov-2019}. In fact, his proof applies the techniques
from \cite{ben-tal-nemirovski-book}, which reveals that the 
SONC cone is even second-order representable. Wang and Magron
gave an alternative proof
based on binomial squares and $\mathcal{A}$-mediated sets
\cite{wang-magron-second-order}.

Here, we take the general view of the $\mathcal{S}$-cone as well as a
primal-dual viewpoint. Generalizing the results of Averkov and of
Wang and Magron,
we show that 
rational $\mathcal{S}$-cones and their duals are second-order representable and provide
explicit and direct descriptions. Our proof combines the techniques for the 
second-order cones techniques from \cite{ben-tal-nemirovski-book} with
the concepts and the duality theory of the $\mathcal{S}$-cone from \cite{knt-2019}.
Our derivation is different from the approach of
Wang and Magron, 
and it does not need binomial squares or $\mathcal{A}$-mediated sets. 
Moreover, our second-order representation prevents the consideration of 
redundant circuits by using a characterization of the extreme rays of the
$\mathcal{S}$-cone from \cite{knt-2019}.

Beyond the specific representability result, the goal of the paper is to offer further 
insights into the use of the framework of the $\mathcal{S}$-cone as 
a generalization of SONC and SAGE.

\medskip

\noindent
{\bf Acknowledgement.}
The work was partially supported through the 
project ``Real Algebraic Geometry and Optimization'' jointly funded by
the German Academic Exchange Service DAAD and 
the Research Council of Norway RCN.

\section{Preliminaries\label{se:prelim}}

	Throughout the text, we use the notations $\N=\{0,1,2,3,\ldots\}$ and $\R_+=\{x\in\R:x\ge 0\}$.
	For a finite subset $\cA\subseteq\R^n$, denote by $\R^\cA$ the set of $|\cA|$-dimensional vectors whose components are indexed by the set $\cA$. Moreover, we write
\[
|\xb|^{\alpha} = \prod_{j=1}^n |x_j|^{\alpha_{j}} \quad\text{ and }
\quad \xb^{\beta} = \prod_{j=1}^n x_j^{\beta_{j}},
\]
and if one component of $\xb$ is zero and the corresponding exponent is negative, then we set $|\xb|^\alpha = \infty$.

\subsection{The \texorpdfstring{$\mathcal{S}$}{S}-cone, SAGE and SONC\label{se:three-cones}}
We explain that the $\mathcal{S}$-cone generalizes the SAGE
cone and the SONC cone and collect some basic properties of
the three cones.

\smallskip

\noindent
\emph{The SAGE cone}. Let $\mathcal{A}$ be a non-empty, finite set.
An exponential sum supported on $\mathcal{A}$ is a function of the form
\begin{equation}
  \label{eq:subst-exp}
\yb \mapsto \sum_{\alpha \in \mathcal{A}} c_{\alpha} \exp(\alpha^T \mathbf{y})
\end{equation}
with real coefficients $c_{\alpha}$.
If $\mathcal{B} = \emptyset$, then $\R[\mathcal{A},\mathcal{B}]$
can be identified with the space of exponential sums supported on $\mathcal{A}$
by means of the substitution $|x_i| = \exp(y_i)$.

For
                finite $\cA\subseteq\R^n$, $\cA' \subsetneq \cA$ and
                $\beta \in \cA \setminus \cA'$, the SAGE cone 
                $C_{\mathrm{SAGE}}(\cA)$ is defined as
            \[
              C_{\mathrm{SAGE}}(\cA) = \sum_{\beta \in \cA}
                  C_{\text{AGE}}(\cA \setminus \{\beta\},\beta),
                \]
                where for $\mathcal{A'} := \mathcal{A} \setminus \{\beta\}$
                                \[
                                C_{\mathrm{AGE}}(\cA',\beta) = \Big\{c\in\R^\cA:
                                c_{\alpha} \ge 0 \text{ for } \alpha \in \cA', \,
                                \sum\limits_{\alpha\in\cA'} c_\alpha \exp(\alpha^Tx)
                                + c_\beta \exp(\beta^Tx)\ge 0
                                \text{ on } \R^n\Big\}
                                \]
(see \cite{chandrasekaran-shah-2016}). We observe that the $\mathcal{S}$-cone
$\CS(\mathcal{A},\emptyset)$ can be identified with $C_{\sage}(\mathcal{A})$ 
using the substitution~\eqref{eq:subst-exp}. $C_{\sage}(\mathcal{A})$ is 
a closed convex cone in $\R^{\mathcal{A}}$.
The membership problem for 
this convex cone can be formulated as a relative entropy program
(\cite{mcw-2018}, see also Proposition~\ref{thm:oddImplication} below).
 
\smallskip

\noindent
\emph{The SONC cone.} Here, let the non-empty finite set $\mathcal{A}$ be contained
 in $\N^n$.
 Let 
  \begin{equation} \label{eq:polynomialcircuitset}
 \begin{array}{rcl}
 I(\mathcal{A}) & = & \big\{ (A,\beta) \, : \,
 A \subseteq (2 \N)^n\cap\cA \text{ affinely independent},
 \; \beta \in \relinter(\conv A) \cap \cA \big\},
 \end{array}
 \end{equation}
 where $\relinter$ denotes the relative interior of a set.
 For singleton sets $A = \{\alpha\}$, the sets $(A,\beta)$ are formally
 of the form $(\{\alpha\}, \alpha)$. By convention, we write these circuits
 simply as $(\alpha)$, and with this convention, 
 the set $\{(\alpha) \, : \, \alpha \in (2 \N)^n\} \cap \mathcal{A}$
 is contained in $I(\cA)$.
 
 For $(A,\beta) \in I(\mathcal{A})$, let $P_{A,\beta}$ denote the set of
 polynomials  in $\R[x_1, \ldots, x_n]$ whose
 supports are contained in 
 $A\cup\{\beta\}$ 
 and which are non-negative on $\R^n$.
 The Minkowski sum
 	\[
 	C_{\sonc}(\mathcal{A}) \ = \ \sum_{(A,\beta) \ \in \  I(\mathcal{A})} P_{A,\beta}
 	\]
 	defines the \emph{cone of SONC polynomials with
 		support $\mathcal{A}$}
    (see \cite{averkov-2019,iliman-dewolff-resmathsci}).

The cone $C_{\sonc}(\mathcal{A})$ is a closed convex cone, and it can be recognized
as a special case of a rational $\mathcal{S}$-cone by observing
\[
  C_{\sonc}(\mathcal{A}) = C_{\mathcal{S}}\left( \mathcal{A}\cap(2\N)^n,\mathcal{A}\cap(\N^n\setminus(2\N)^n)\right)
\]
(see \cite{knt-2019}). Using the results from~\cite{mcw-2018}, membership
in the SONC cone can also be formulated in terms of a relative entropy program.

\smallskip

\noindent
\emph{The $\mathcal{S}$-cone.}
The $\mathcal{S}$-cone from Definition~1 offers a uniform setting for the
SAGE and the SONC cones.
We collect some further properties of the $\mathcal{S}$-cone.
For a non-empty finite set 
$\cA\subseteq \R^n$ and $\beta\in \N^n \setminus\left((2\N)^n\cup\cA\right)$ let
\[
\Po{\cA, \beta} := \left\{ f \ : \ f = \sum_{\alpha \in \cA} c_\alpha |\xb|^\alpha + c_\beta \xb^{\beta}, f(\xb) \geq 0 \; \, \forall \; \xb\in\R^n, \, c_{|\cA}\in\R_+^\cA,
\, c_\beta \in\R\right\}
\]
be the cone of non-negative odd AG functions supported on $(\cA, \beta)$, and similarly  for $\beta\in\R^n \setminus \cA$ let
\begin{equation}
\label{eq:p-a-beta-even}
\Pe{\cA, \beta} := \left\{ f \ : \ f = \sum_{\alpha \in \cA} c_\alpha |\xb|^\alpha + c_\beta
| \xb|^{\beta}, f(\xb) \geq 0 \; \, \forall \;\xb\in\R^n, \, c_{|\cA}\in\R_+^\cA,  \, 
c_{\beta} \in\R\right\}
\end{equation}
be the cone of non-negative even AG functions supported on $(\cA, \beta)$. 
By definition, 
\[
\CS(\cA, \cB) = \sum_{\alpha \in \cA} \Pe{\cA\setminus\set{\alpha}, \alpha} + \sum_{\beta \in \cB} \Po{\cA, \beta}.
\]
Note that non-negative even AG functions correspond exactly to the 
AGE functions (arithmetic-geometric exponentials) in 
\cite{chandrasekaran-shah-2016} and \cite{mcw-2018}.

\smallskip

The following alternative representation allows to express the $\mathcal{S}$-cone
in terms of the SAGE cone. Here, $|d|$ denotes the absolute value of the vector  
$d\in\R^\cB$, taken component-wise.

\begin{prop}{\cite{knt-2019}}\label{prop:SconeSAGE}
	Let $\emptyset\ne \cA\subseteq\R^n$, $\cB\subseteq\N^n\setminus(2\N)^n$ be
	finite and disjoint. 
Then,
	\begin{align} 
	\CS(\cA,\cB) & = \left\{\sum\limits_{\alpha\in\cA}c_\alpha|x|^\alpha + \sum\limits_{\beta\in\cB}d_\beta x^\beta
	: (c,-|d|)\in C_{\mathrm{SAGE}}(\cA\cup\cB)\right\} 
	\label{eq:scone-sage1} \\
	& = \left\{\sum\limits_{\alpha\in\cA}c_\alpha|x|^\alpha + \sum\limits_{\beta\in\cB}d_\beta x^\beta :
	\exists t\in\R^\cB \; \, (c,t)\in C_{\mathrm{SAGE}}(\cA\cup\cB), \,
	t\le -|d|\right\}.
	\label{eq:scone-sage2}
	\end{align}
\end{prop}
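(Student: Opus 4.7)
The proof proceeds by establishing (i) the equivalence of the two representations (\ref{eq:scone-sage1}) and (\ref{eq:scone-sage2}) via a monotonicity argument and (ii) the equality $\CS(\cA, \cB) = $ right-hand side of (\ref{eq:scone-sage1}) by two separate inclusions.

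For (i), observe that $C_{\mathrm{SAGE}}(\cA \cup \cB)$ is monotone in the $\cB$-coordinates: for any $u \geq 0$ on $\cB$, the vector $(0, u)$ lies in $C_{\mathrm{SAGE}}(\cA \cup \cB)$ as a trivially non-negative combination of the monomials $\exp(\beta^T y)$. Consequently, if $(c, t) \in C_{\mathrm{SAGE}}(\cA \cup \cB)$ with $t \leq -|d|$, then $(c, -|d|) = (c, t) + (0, -|d| - t) \in C_{\mathrm{SAGE}}(\cA \cup \cB)$; conversely $(c, -|d|)$ immediately witnesses (\ref{eq:scone-sage2}) with $t = -|d|$.

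For the inclusion $\CS(\cA, \cB) \subseteq$ RHS of (\ref{eq:scone-sage1}), I exploit the Minkowski decomposition $\CS(\cA, \cB) = \sum_{\alpha \in \cA} \Pe{\cA \setminus \{\alpha\}, \alpha} + \sum_{\beta \in \cB} \Po{\cA, \beta}$. Under the substitution $|x_i| = \exp(y_i)$, each even AG summand $\sum_\gamma c_\gamma^{(\alpha)} |x|^\gamma \in \Pe{\cA \setminus \{\alpha\}, \alpha}$ is exactly an element of $C_{\mathrm{AGE}}(\cA \setminus \{\alpha\}, \alpha) \subseteq C_{\mathrm{SAGE}}(\cA \cup \cB)$ (extending by zeros on $\cB$). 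An odd AG summand $\sum_\alpha c_\alpha^{(\beta)} |x|^\alpha + d_\beta x^\beta \in \Po{\cA, \beta}$ is non-negative precisely when $\sum_\alpha c_\alpha^{(\beta)} |x|^\alpha \geq |d_\beta||x|^\beta$ (because $x^\beta$ attains both signs and $|d_\beta x^\beta| = |d_\beta||x|^\beta$), which corresponds to $(c^{(\beta)}, -|d_\beta| e_\beta) \in C_{\mathrm{AGE}}(\cA, \beta) \subseteq C_{\mathrm{SAGE}}(\cA \cup \cB)$. Summing all of these AGE certificates produces $(c, -|d|) \in C_{\mathrm{SAGE}}(\cA \cup \cB)$, as required.

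The reverse inclusion is the technical heart. Given a SAGE decomposition $(c, -|d|) = \sum_{\gamma \in \cA \cup \cB} a^{(\gamma)}$ with $a^{(\gamma)} \in C_{\mathrm{AGE}}(\cA \cup \cB \setminus \{\gamma\}, \gamma)$, the individual AGE pieces may simultaneously contribute positive mass to several $|x|^{\beta}$-terms, and such cross-contributions cannot be directly identified with the signed terms $x^\beta$ living in $\R[\cA, \cB]$. My plan is to reshape the decomposition so that each $\beta \in \cB$ is served by a single AGE certificate supported on $\cA \cup \{\beta\}$. The coordinate balance $-|d_\beta| = a^{(\beta)}_\beta + \sum_{\gamma \neq \beta} a^{(\gamma)}_\beta$ with non-negative cross-terms forces $a^{(\beta)}_\beta \leq -|d_\beta|$, providing slack at the AGE centre. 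Invoking the AM-GM characterization of $C_{\mathrm{AGE}}(\cA, \beta)$, I absorb the positive $\cB$-cross-contributions into the $\cA$-support of a new certificate $(\tilde c^{(\beta)}, -|d_\beta| e_\beta) \in C_{\mathrm{AGE}}(\cA, \beta)$ with $\tilde c^{(\beta)} \geq 0$; this corresponds to a non-negative odd AG function $\sum_\alpha \tilde c^{(\beta)}_\alpha |x|^\alpha + d_\beta x^\beta \in \Po{\cA, \beta}$. The remaining residue $c - \sum_\beta \tilde c^{(\beta)} \in \R_{\geq 0}^{\cA}$ is represented as a non-negative combination of singleton even AG monomials $|x|^\alpha$, so that $f \in \CS(\cA, \cB)$. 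The main obstacle is the feasibility of this reshaping in full generality, which I plan to verify by combining convexity of $C_{\mathrm{SAGE}}(\cA \cup \cB)$ with the monotonicity established in step (i).
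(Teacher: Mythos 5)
The paper itself offers no proof of this proposition --- it is quoted verbatim from \cite{knt-2019} --- so there is no in-house argument to compare against; I am judging your proposal on its own merits. Your step (i), the monotonicity of $C_{\mathrm{SAGE}}(\cA\cup\cB)$ in the $\cB$-coordinates, is correct, and so is the inclusion $\CS(\cA,\cB)\subseteq{}$ the right-hand side of \eqref{eq:scone-sage1}: each even summand is an AGE vector supported on $\cA$, and an odd summand $\sum_\alpha c_\alpha|x|^\alpha+d_\beta x^\beta$ is non-negative exactly when $(c,-|d_\beta|)\in C_{\mathrm{AGE}}(\cA,\beta)$ because $\beta$ has an odd coordinate.

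The reverse inclusion, however, contains a genuine gap which you acknowledge but do not close. In an AGE decomposition $(c,-|d|)=\sum_\gamma a^{(\gamma)}$, a summand $a^{(\gamma)}$ with $\gamma\ne\beta$ may carry strictly positive mass $a^{(\gamma)}_\beta>0$; such a term is a positive multiple of $|x|^\beta$, which is not an element of $\R[\cA,\cB]$, and it cannot be ``absorbed into the $\cA$-support'' of a new certificate: moving mass from the exponent $\beta$ to exponents in $\cA$ changes the function, while simply deleting the term can destroy non-negativity of $a^{(\gamma)}$, since that positive term may be exactly what dominates the negative term at $\gamma$ (e.g.\ $e^{y}+e^{3y}-2e^{2y}=e^y(e^y-1)^2\ge 0$ needs both odd exponents $1,3\in\cB$ to control the even exponent $2\in\cA$). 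Convexity and the monotonicity of step (i) let you pass from $(c,t)$ to $(c,-|d|)$, but they give no mechanism for re-routing positive $\cB$-mass between summands. What is actually required is a cancellation-free (support-reduction) property of SAGE decompositions: whenever the aggregate coefficient at every $\beta\in\cB$ is non-positive, the AGE summands can be chosen so that $a^{(\gamma)}_\beta=0$ for all $\beta\in\cB\setminus\{\gamma\}$. This is a non-trivial structural result, established in \cite{mcw-2018} and exploited in \cite{knt-2019}; with it, each $a^{(\beta)}$ turns into a non-negative odd AG function in $\Po{\cA,\beta}$ and each $a^{(\alpha)}$ into an even one, and the proof closes. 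Without that lemma, or an equivalent argument via the relative-entropy characterization of Proposition~\ref{thm:oddImplication}, the inclusion $\supseteq$ in \eqref{eq:scone-sage1} is not established.
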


For a finite set $\emptyset \neq \cA\subseteq\R^n$,
we use the notion
\begin{align*}
	\CS(\cA):=\CS(\cA,\emptyset)=C_{\text{SAGE}}(\cA)
\end{align*}
and immediately 
observe $\CS(\cA) = \sum_{\alpha \in \cA} \Pe{\cA\setminus\set{\alpha}, \alpha}$. Hence, for our purpose it suffices to study the cone $\Pe{\cA,\beta}$ of even AG functions and use the results of this cone for the odd case in Section \ref{sec:wholeScone}.

\smallskip

Using the relative entropy function and the circuit number,
the cones $\Pe{\cA, \beta}$ and $\Po{\cA, \beta}$
can be characterized in terms of convex optimization problems.
For a finite set $\emptyset \neq \cA \subseteq \R^n$, denote by
$D:\R_{>0}^\cA \times \R_{>0}^\cA \to \R$,
\[
D(\nu, \gamma) \ = \ \sum_{\alpha \in \cA} \nu_\alpha \ln \left( \frac{\nu_\alpha}{\gamma_\alpha} \right),
\]
the \emph{relative entropy function}. $D$ can also
be extended to $\R_{+}^\cA \times \R_{+}^\cA \to \R \cup \set{\infty}$ via the conventions 
$0 \cdot \ln \frac{0}{y} = 0$ for $y \geq 0$ and $y \cdot \ln \frac{y}{0} = \infty$ for $y > 0$. 
Non-negativity of an (even or odd) AG function $f$
with coefficients $c_{\alpha}$ and $c_{\beta}$
can be characterized through the product 
$\prod_{\alpha \in \mathcal{A}} 
\left( c_\alpha / \lambda_\alpha\right)^{\lambda_\alpha}$
and $c_{\beta}$ (see \cite[Theorem 2.7]{knt-2019}). For an affinely independent ground
set $\mathcal{A}$, this product is called the \emph{circuit number} of $f$ (see
\cite{iliman-dewolff-resmathsci}). In particular, for an even AG function, this
non-negativity characterization in terms of the circuit number is given by
\begin{equation}
\label{eq:circuitnumber}
\prod_{\alpha \in \mathcal{A}} \left(\frac{c_\alpha}{\lambda_\alpha}\right)^{\lambda_\alpha} \ge -c_{\beta}. 
\end{equation}

The following characterization of $\Pe{\cA, \beta}$ and $\Po{\cA,\beta}$
in terms of the relative entropy function and in terms of
the circuit number
is a direct consequence of Theorem 2.7 of \cite{knt-2019}.

\begin{prop}\label{thm:oddImplication}
	Let $\cA\subseteq \R^n$ be a non-empty finite set, $  \beta \in \R^n \setminus\cA$ and an
	AG function
	$f$ with coefficient vector $\cb$ supported on $\cA \cup\{\beta\}$. 
	\begin{enumerate}
		\item If $f$ is an even AG function, then
		\begin{align*}
		f \in \Pe{\cA, \beta} & \iff
		\exists \nu\in\R_{+}^{\cA} \quad \sum\nolimits_{\alpha\in \cA} \nu_{\alpha}\alpha=\Big( \sum\nolimits_{\alpha\in \cA} \nu_{\alpha} \Big) \beta, \; D(\nu, e\cdot c) \le c_\beta \\
		& \iff \exists \lambda\in\R_{+}^{\cA} \quad
		\sum_{\alpha\in \cA} \lambda_{\alpha}\alpha=\beta, \; \sum\nolimits_{\alpha\in \cA} \lambda_{\alpha}=1, \; \prod_{\alpha \in \cA} \left(\frac{c_\alpha}{\lambda_\alpha}\right)^{\lambda_\alpha} \ge -c_\beta.
		\end{align*}
		\item  If $f$ is an odd AG function, then
		\begin{align*}
		f \in \Po{\cA, \beta} & \iff
		\exists \nu\in\R_{+}^{\cA} \quad \sum\nolimits_{\alpha\in \cA} \nu_{\alpha}\alpha=\Big( \sum\nolimits_{\alpha\in \cA} \nu_{\alpha} \Big) \beta, \; D(\nu, e\cdot c) \le -|c_\beta| \\
		& \iff \exists \lambda\in\R_{+}^{\cA} \quad
		\sum_{\alpha\in \cA} \lambda_{\alpha}\alpha=\beta, \; \sum\nolimits_{\alpha\in \cA} \lambda_{\alpha}=1, \; \prod_{\alpha \in \cA} \left(\frac{c_\alpha}{\lambda_\alpha}\right)^{\lambda_\alpha} \ge |c_\beta|.
		\end{align*}
	\end{enumerate}
\end{prop}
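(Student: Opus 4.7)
The plan is to derive both parts by reducing to Theorem~2.7 of \cite{knt-2019}, using the substitution $|x_i| = \exp(y_i)$ to pass between AG functions on $\R^n$ and exponential sums on $\R^n$.

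For part~(1), the substitution identifies non-negativity of the even AG function $f$ on $(\R\setminus\{0\})^n$ (and then, by continuity, on $\R^n$) with non-negativity of the exponential sum $y \mapsto \sum_{\alpha \in \cA} c_\alpha \exp(\alpha^T y) + c_\beta \exp(\beta^T y)$; the relative-entropy characterization is then precisely what Theorem~2.7 of \cite{knt-2019} provides. To move to the circuit-number form, assume a feasible $\nu$ is nonzero, set $s = \sum_\alpha \nu_\alpha > 0$ and $\lambda_\alpha = \nu_\alpha / s$, so that $\sum_\alpha \lambda_\alpha = 1$ and the moment condition becomes $\sum_\alpha \lambda_\alpha \alpha = \beta$. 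A direct expansion gives
\begin{align*}
D(\nu, e \cdot c) \;=\; s \Big( \ln s - 1 + \sum\nolimits_\alpha \lambda_\alpha \ln(\lambda_\alpha / c_\alpha) \Big).
\end{align*}
Minimizing this one-dimensional convex function over $s > 0$ (with $\lambda$ fixed) yields optimal value $-\prod_\alpha (c_\alpha/\lambda_\alpha)^{\lambda_\alpha}$. Hence the existence of a feasible $\nu$ with $D(\nu, e\cdot c) \le c_\beta$ is equivalent to the existence of a feasible $\lambda$ with $\prod_\alpha (c_\alpha/\lambda_\alpha)^{\lambda_\alpha} \ge -c_\beta$, which is the stated circuit-number condition.

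For part~(2), the key observation is that $\beta \in \N^n \setminus (2\N)^n$: flipping the sign of an $x_i$ with $\beta_i$ odd changes the sign of $x^\beta$ without affecting any $|x|^\alpha$. Consequently, the odd AG function $\sum_\alpha c_\alpha |x|^\alpha + c_\beta x^\beta$ is non-negative on $\R^n$ if and only if the even AG function $\sum_\alpha c_\alpha |x|^\alpha - |c_\beta|\, |x|^\beta$ is non-negative on $\R^n$. Applying part~(1) with $c_\beta$ replaced by $-|c_\beta|$ then yields both stated characterizations in the odd case.

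The step I expect to require the most care is the equivalence between the two formulations at the boundary: the degenerate case $\nu = 0$ (where no nontrivial convex combination of $\cA$ equals $\beta$), vanishing coefficients $c_\alpha$, and the conventions $0\cdot\ln 0 = 0$ and $y\cdot\ln(y/0) = \infty$. These cases need to be threaded through consistently with the $0^0 = 1$ convention used in the circuit-number product, but they require no ideas beyond the single-variable optimization in $s$ sketched above.
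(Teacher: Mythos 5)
Your proposal is correct and matches the paper's (implicit) approach: the paper offers no proof at all, recording the proposition as ``a direct consequence of Theorem 2.7 of \cite{knt-2019}'', and your reduction to that theorem via the substitution $|x_i|=\exp(y_i)$, the sign-flip argument for odd $\beta$, and the one-dimensional optimization over the scale $s$ --- whose minimum value $-\prod_{\alpha\in\cA}(c_\alpha/\lambda_\alpha)^{\lambda_\alpha}$ you compute correctly --- is exactly the standard route between the relative-entropy and circuit-number formulations. The one caveat is the degenerate case you already flag: if $\beta\notin\conv(\cA)$ the only feasible $\nu$ is $\nu=0$ (so the entropy condition reduces to $c_\beta\ge 0$) while no feasible $\lambda$ exists at all, so the two displayed conditions agree only under the implicit assumption $\beta\in\conv(\cA)$, which holds in every application the paper makes of this proposition.
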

If $\cA$ is a set of affinely independent vectors and $\beta\in\relinter \cA$, 
then $\lambda$ is unique. We call the corresponding AG function a 
\emph{circuit function}, the tuple $(\cA,\beta)$ the \emph{circuit} and identify the unique $\lambda$ with the above declared characteristics 
$\lambda=\lambda(\cA,\beta)$.

\subsection{Duality theory}

Studying the duality theory has been initiated in
\cite{chandrasekaran-shah-2016} (for SAGE), 
\cite{dnt-2018} (for SONC)
and \cite{knt-2019} (for the $\mathcal{S}$-cone). See also the recent work of Papp \cite{papp-2019}, who developed an
alternative approach for deriving the dual cones, by expressing the
non-negativity of circuit polynomials in terms of a power cone.
We can identify the dual space of $\R[\cA]$ with $\R^{\cA}$.
For $f \in \R[\cA]$ with coefficients $\cb\in\R^\cA$ and an element $\vb \in \R^{\cA}$, we consider the natural duality pairing
\begin{equation}\label{eq:pairing}
\vb(f) \ = \ \sum\limits_{\alpha\in \cA}v_\alpha c_\alpha \, .
\end{equation}
Using this notation, the dual cone $(\CS({\cA}))^*$ is defined as
\[
(\CS({\cA}))^* \ = \ \left\{ \vb \in \R^{\cA} \, : \, \vb(f) \ge 0 \text{ for all } f \in \CS(\cA) \right\}.
\]

The following statement expresses the dual $\Sc$-cone in terms of
the dual SAGE cone.

\begin{prop}Let 
 $\emptyset\ne\cA\subseteq\R^n$ and $\cB\subseteq\N^n\setminus
(2\N)^n$ disjoint and finite.
	The dual cone of the $\Sc$-cone $\CS(\cA,\cB)$ is 
	\begin{align}
	\CS(\cA,\cB)^*= & \left\{(\vb,\wb)\in\R^\cA\times\R^\cB: (\vb,|\wb|)\in C_{\text{SAGE}}(\cA\cup\cB)^*\right\} 
	\label{eq:dual-scone-sage1} \\
	= & \left\{(\vb,\wb)\in\R^\cA\times\R^\cB: \exists \ub \in \R^{\cB} \; \, (\vb,\ub)\in C_{\text{SAGE}}(\cA\cup\cB)^*, \, \ub \ge |\wb|
	\right\}.
	\label{eq:dual-scone-sage2}
	\end{align}
\end{prop}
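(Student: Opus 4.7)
The plan is to derive both dual representations by dualizing the primal characterizations of $\CS(\cA,\cB)$ given in Proposition~\ref{prop:SconeSAGE}.

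For~\eqref{eq:dual-scone-sage1}, I would start from~\eqref{eq:scone-sage1}: since $(c,d) \in \CS(\cA,\cB)$ iff $(c,-|d|) \in C_{\text{SAGE}}(\cA\cup\cB)$, the $\mathcal S$-cone is invariant under componentwise sign flips of $d$. Hence, for $(\vb,\wb) \in \CS(\cA,\cB)^*$, the inequality $\vb \cdot c + \wb \cdot d \geq 0$ must hold for every sign pattern of $d$; minimizing over signs gives the sharper condition $\vb \cdot c - |\wb|\cdot|d| \geq 0$. Substituting $t = -|d|$ turns this into $(\vb,|\wb|)\cdot(c,t) \geq 0$ for every $(c,t) \in C_{\text{SAGE}}(\cA\cup\cB)$ with $t \leq 0$. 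Lifting this to all $t \in \R^{\cB}$, via the monotonicity $C_{\text{SAGE}} + \{0\}\times\R^{\cB}_+ = C_{\text{SAGE}}$, yields $(\vb,|\wb|) \in C_{\text{SAGE}}(\cA\cup\cB)^*$, which is~\eqref{eq:dual-scone-sage1}.

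For~\eqref{eq:dual-scone-sage2}, I would use~\eqref{eq:scone-sage2}, writing $\CS(\cA,\cB) = \pi(\tilde K)$ for
\[
\tilde K = \{(c,t,d) : (c,t) \in C_{\text{SAGE}}(\cA\cup\cB),\, t+d \leq 0,\, t-d \leq 0\}
\]
and $\pi:(c,t,d)\mapsto(c,d)$. Conic duality then gives $(\vb,\wb) \in \CS^*$ iff $(\vb,0,\wb) \in \tilde K^*$. Computing $\tilde K^*$ as the sum of the dual of the lifted SAGE cone with the duals of the two half-spaces $\{t+d \leq 0\}$ and $\{t-d \leq 0\}$ introduces non-negative multipliers $a,b \in \R^{\cB}_+$ whose combinations produce the $u$-coordinate $\ub = a+b$ and the $w$-coordinate $\wb = b-a$. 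Existence of such $a,b$ is equivalent to $\ub \geq |\wb|$, giving~\eqref{eq:dual-scone-sage2} together with the condition $(\vb,\ub) \in C_{\text{SAGE}}(\cA\cup\cB)^*$.

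The equality \eqref{eq:dual-scone-sage1}~$=$~\eqref{eq:dual-scone-sage2} is immediate in one direction by choosing $\ub = |\wb|$. The reverse direction requires the downward monotonicity of $C_{\text{SAGE}}(\cA\cup\cB)^*$ in the $\cB$-components (if $(\vb,\ub) \in C_{\text{SAGE}}^*$ and $0 \leq \ub' \leq \ub$, then $(\vb,\ub') \in C_{\text{SAGE}}^*$), which is the main anticipated obstacle. This would be argued from the extreme-ray structure of $C_{\text{SAGE}}(\cA\cup\cB)$: each $u_\beta$ enters the dual constraints either trivially as $u_\beta \geq 0$ or via an AM-GM upper bound coming from a circuit centered at $\beta$.
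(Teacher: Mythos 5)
Your second paragraph is essentially the paper's own proof of \eqref{eq:dual-scone-sage2}: the paper likewise lifts via \eqref{eq:scone-sage2}, rewrites $t\le-|d|$ as the two half-spaces $t+d\le 0$, $t-d\le 0$, dualizes the intersection into a Minkowski sum with generators $(0,-e^{(\beta)},\pm e^{(\beta)})$, and reads off $\ub=a+b$, $\wb=b-a$, $\ub\ge|\wb|$. Both you and the paper invoke $(K_1\cap K_2)^*=K_1^*+K_2^*$ without a closedness/qualification argument, so no complaint on that score.

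The genuine gap is exactly where you flag it, and it cannot be closed. In your first paragraph, the ``lifting'' step is invalid as stated: knowing $\vb\cdot c+|\wb|\cdot t\ge 0$ for all $(c,t)\in C_{\text{SAGE}}(\cA\cup\cB)$ with $t\le 0$ does not yield the inequality on all of $C_{\text{SAGE}}(\cA\cup\cB)$, because reducing a general $(c,t)$ to one with $t\le 0$ requires \emph{decreasing} $t$, and $C_{\text{SAGE}}$ is only upward, not downward, closed. Your proposed repair -- downward monotonicity of $C_{\text{SAGE}}(\cA\cup\cB)^*$ in the $\cB$-components -- fails for the case your dichotomy omits: a point $\beta\in\cB$ may occur as an \emph{outer} vertex of a circuit $(A',\gamma)\in I(\cA\cup\cB)$ (the SAGE cone does not distinguish $\cA$ from $\cB$), so $u_\beta$ also appears on the right-hand side of a constraint $\ln v_\gamma\le\sum_{\alpha}\lambda_\alpha\ln u_\alpha$, which tightens as $u_\beta$ decreases. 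Concretely, take $n=1$, $\cA=\{0,4\}$, $\cB=\{1,3\}$, $\vb=(1,1)$, $\wb=(\tfrac18,1)$. Pairing against the generators of $\CS(\cA,\cB)$ (non-negative AG functions, whose coefficients satisfy $|d_1|\le(\tfrac43 c_0)^{3/4}(4c_4)^{1/4}\le c_0+c_4$ and $|d_3|\le(4c_0)^{1/4}(\tfrac43 c_4)^{3/4}\le c_0+c_4$ by weighted AM--GM) shows $(\vb,\wb)\in\CS(\cA,\cB)^*$, and \eqref{eq:dual-scone-sage2} holds with $\ub=(1,1)$. But $(\vb,|\wb|)=(1,\tfrac18,1,1)\notin C_{\text{SAGE}}(\{0,1,3,4\})^*$: the AGE exponential $\tfrac13 e^{y}-e^{3y}+\tfrac23 e^{4y}$ is non-negative (circuit $(\{1,4\},3)$ with circuit number $1$), yet its pairing with $(1,\tfrac18,1,1)$ equals $\tfrac1{24}-1+\tfrac23=-\tfrac{7}{24}<0$. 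So the sets in \eqref{eq:dual-scone-sage1} and \eqref{eq:dual-scone-sage2} genuinely differ here, the former being strictly smaller; no monotonicity argument can rescue your first paragraph. Note that the paper's own proof in effect only establishes \eqref{eq:dual-scone-sage2} (and appeals to ``convexity'' for the rest), and everything used later in the paper involves only circuits with outer vertices in $\cA$, i.e., the \eqref{eq:dual-scone-sage2}-type description; that is the version you should treat as the operative statement.
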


\begin{proof}
	We use~\eqref{eq:scone-sage2}, which provides
	a characterization for the primal cone $\CS(\cA,\cB)$ in
	terms of an existential quantification.
	Consider its lifted cone
	\begin{align}
	\widehat{\CS}(\cA,\cB) & :=  C_{\text{SAGE}}(\cA\cup\cB)\times\R^\cB \cap \left\{(\cb,\mathbf{t},\mathbf{d})
	: t_\beta\le -|d_\beta| \text{ for all }\beta\in\cB\right\} 
	\nonumber \\
	& = C_{\text{SAGE}}(\cA\cup\cB)\times\R^\cB \cap \left\{(\cb,\mathbf{t},\mathbf{d})
	: t_\beta\le d_\beta, t_\beta\le -d_\beta \text{ for all }\beta\in\cB\right\}
	\label{eq:liftedcone1}
	\end{align}
	in the space $\R^\cA\times\R^\cB\times\R^\cB$.
	The dual cone of the right-hand cone in~\eqref{eq:liftedcone1} is
	the set
	\begin{align*}
	\cone \left\{(0,\ldots,0,-e^{(\beta)}, \pm e^{(\beta)})
	\, : \,
	 \beta\in\cB\right\},
	\end{align*}
	where $e^{(\beta)}$ denotes the unit vector with respect to $\beta\in\cB$.
	As intersection and Minkowski sum are dual operations, we obtain
	\begin{align*}
	\widehat{\CS}(\cA,\cB)^*= C_{\text{SAGE}}(\cA\cup\cB)^*\times
	\{0\} + \cone \left\{(0,\ldots,0,-e^{(\beta)},\pm e^{(\beta)})
	\, : \,
	\beta\in\cB \right\}.
	\end{align*}
	
	Identifying the $\mathcal{S}$-cone with its coefficients, we can express
	$\CS(\cA,\cB)^*$ in terms of the lifted cone $\widehat{\CS}(\cA,\cB)$ by
	\[
	  \CS(\cA,\cB)^* = 
	  \widehat{\CS}(\cA,\cB) \cap \left\{ 
	  (\vb,\mathbf{s},\wb) \in \R^{\cA} \times \R^{\cB} \times \R^{\cB} \, : \,
	  \mathbf{s} = 0\right\}.
	\]
	Thus, $(\vb,\wb) \in \CS(\cA,\cB)$ whenever
	$(\vb,|\wb|) \in C_{\text{SAGE}}(\cA \cup \cB)^*$.
	Convexity then implies the 
	second characterization~\eqref{eq:dual-scone-sage2}.
\end{proof}

Hence, as in the primal case, it suffices to study even 
AG functions in the dual situation. We will make use of a representation 
of the dual of the $\Sc$-cone from 
\cite{knt-2019}. For this, observe that similar to the SONC case
in~\eqref{eq:polynomialcircuitset}, one can also consider
circuits in the case of the SAGE cone. In slight variation
of~\eqref{eq:polynomialcircuitset}, for a finite set
 $\emptyset\ne\cA\subseteq\R^n$,
the set of \emph{circuits} supported on $\cA$ is the set
\[
\begin{array}{rcl}
I(\mathcal{A}) & = & \big\{ (A,\beta) \, : \,
A \subseteq \cA \text{ affinely independent}, \; \beta \in \relinter(\conv A) \cap (\cA\setminus A) \big\}.
\end{array}
\]
 
 Two examples of circuits are the pairs $(A,\beta)$ with $A=\{0,6\}$ and $\beta=\{2\}$ (see Figure \ref{fig:circuit1}) and $(A',\beta')$ with $A'=\{(0,0)^T,(4,2)^T,(2,4)^T\}$ and $\beta'=(1,1)^T$ (see
 Figure \ref{fig:circuit2}).

\begin{figure}
\begin{minipage}{.5\textwidth}
	\centering
\begin{tikzpicture} [scale=1.2,cap=round]
\def\costhirty{0.8660256}

\tikzstyle{axes}=[]
\tikzstyle{important line}=[very thick]
\tikzstyle{information text}=[rounded corners,fill=purple!10,inner sep=1ex]

\draw[style=help lines,step=0.5cm] (0,0) grid (3.2,0);

\begin{scope}[style=axes]
\draw[->] (-0.25,0) -- (3.25,0) node[right] {$x$};

\end{scope}

\draw[style=important line,cyan]
(0,0) -- (3,0);

\draw[style=important line,cyan]
(3,0.05) -- (3,-0.05) node[below=2pt] {$6$};

\draw[style=important line,cyan]
(0,0.05) -- (0,-0.05) node[below=2pt] {$0$};

\draw[style=important line,purple]
(1,-.05) -- (1,-.05) node[below=2pt] {$2$} ;

\draw[style=important line,purple]
(.95,-.05) -- (1.05,.05);

\draw[style=important line,purple]
(1.05,-.05) -- (.95,.05) ;

\draw[draw=none,white]
(-0.25,0) -- (-0.25,0)  ;
\draw[draw=none,white]
(2.25,0) -- (2.25,0) node[right] {$x$} ;
\draw[draw=none,white]
(0,-0.25) -- (0,-0.25) ;
\draw[draw=none,white]
(0,2.25) -- (0,2.25) node[above] {$y$} ;

\end{tikzpicture}
\caption{Circuit $(A,\beta)$} \label{fig:circuit1}
\end{minipage}%
\begin{minipage}{.5\textwidth}
	\centering
		\begin{tikzpicture} [scale=1,cap=round]
		
		\tikzstyle{axes}=[]
		\tikzstyle{important line}=[very thick]
		\tikzstyle{information text}=[rounded corners,fill=red!10,inner sep=1ex]
		
		\draw[style=help lines,step=0.5cm] (0,0) grid (2.2,2.2);
		
		\begin{scope}[style=axes]
		\draw[->] (-0.25,0) -- (2.25,0) node[right] {$x$};
		\draw[->] (0,-0.25) -- (0,2.25) node[above] {$y$};
		
		\end{scope}
		
		\draw[style=important line,purple]
		(0,0) -- (2,1) node[right=2pt] {$(4,2)^T$};
		
		\draw[style=important line,purple]
		(1,2) -- (0,0) node[below=2pt] {$(0,0)^T$};
		
		\draw[style=important line,purple]
		(1,2) node[above=2pt] {$(2,4)^T$} -- (2,1) ;
		
		\draw[style=important line,cyan]
		(.45,.45) -- (.55,.55) ;
		
		\draw[style=important line,cyan]
		(.45,.55) -- (.55,.45) ;
		
		\draw[draw=none,cyan]
		(0,0.5) node[left=2pt] {$(1,1)^T$} -- (0,0.5) ;

		\end{tikzpicture}
	\caption{Circuit $(A',\beta')$} \label{fig:circuit2}
\end{minipage}
\end{figure}

Thereby, the dual $\Sc$-cone $\CS(\cA)$ can be represented as follows.

\begin{prop}\label{prop:equiv}\cite[Theorem 3.5]{knt-2019}
	Let $\emptyset \neq \cA \subseteq \R^n$ be finite. Then a point 
	$\vb \in \R^{\cA}$
	is contained in $\CS(\cA)^*$ if and only if $\vb\ge 0$ and
	\begin{align*} 
		\ln (v_\beta) \leq \sum_{\alpha\in A} \lambda_{\alpha} \ln(v_\alpha) 
   \text{ for every circuit } (A, \beta) \text{ in } I(\cA)
		\text{ and } \lambda=\lambda(A,\beta).
  \end{align*} 
\end{prop}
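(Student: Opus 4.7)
The plan is to combine the Minkowski decomposition $\CS(\cA) = \sum_{\beta \in \cA} \Pe{\cA \setminus \{\beta\}, \beta}$, which reduces the problem to characterizing the dual of one AGE cone at a time, with the circuit-number description of $\Pe{\cA',\beta}$ from Proposition~\ref{thm:oddImplication} and the weighted AM--GM inequality. Non-negativity of $\vb$ follows immediately, since the monomials $|x|^{\alpha}$ lie in $\CS(\cA)$ and pair with $\vb$ to produce $v_{\alpha}$.

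For the forward implication, I fix a circuit $(A,\beta) \in I(\cA)$ with $\lambda = \lambda(A,\beta)$. Assuming first that $v_{\alpha} > 0$ for every $\alpha \in A$, I test $\vb$ against the even AG function
\[
f \;=\; \sum_{\alpha \in A} \frac{\lambda_{\alpha}}{v_{\alpha}}\, |x|^{\alpha} \;-\; \Big(\prod_{\alpha \in A} v_{\alpha}^{-\lambda_{\alpha}}\Big)\,|x|^{\beta},
\]
whose circuit number $\prod_{\alpha \in A}(c_{\alpha}/\lambda_{\alpha})^{\lambda_{\alpha}}$ equals $\prod_{\alpha \in A} v_{\alpha}^{-\lambda_{\alpha}} = |c_{\beta}|$, so $f \in \Pe{A,\beta} \subseteq \CS(\cA)$. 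Rearranging $\vb(f) \geq 0$ gives $\prod_{\alpha \in A} v_{\alpha}^{\lambda_{\alpha}} \geq v_{\beta}$, which is the claimed inequality in logarithmic form. The boundary case $v_{\alpha_0} = 0$ for some $\alpha_0 \in A$ is handled by a scaling argument: choosing $c_{\alpha_0} = \lambda_{\alpha_0} t$, $c_{\alpha} = \lambda_{\alpha}$ for $\alpha \neq \alpha_0$ in $A$, and $c_{\beta} = -t^{\lambda_{\alpha_0}}$ and letting $t \to \infty$ forces $v_{\beta} = 0$, consistent with the convention $\ln 0 = -\infty$.

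For the reverse implication, assume $\vb \geq 0$ together with all circuit inequalities. By the decomposition above, it suffices to show $\vb(f) \geq 0$ for every $f = \sum_{\alpha \in \cA'} c_{\alpha}|x|^{\alpha} + c_{\beta}|x|^{\beta} \in \Pe{\cA',\beta}$, where $\cA' = \cA \setminus \{\beta\}$. If $c_{\beta} \geq 0$ there is nothing to show, so assume $c_{\beta} < 0$. Proposition~\ref{thm:oddImplication} provides $\lambda \in \R_+^{\cA'}$ supported on some $A \subseteq \cA'$ with $\sum_{\alpha \in A}\lambda_{\alpha}\alpha = \beta$, $\sum_{\alpha \in A}\lambda_{\alpha} = 1$ and $\prod_{\alpha \in A}(c_{\alpha}/\lambda_{\alpha})^{\lambda_{\alpha}} \geq |c_{\beta}|$. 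Applying the weighted AM--GM inequality to the values $v_{\alpha}c_{\alpha}/\lambda_{\alpha}$ with weights $\lambda_{\alpha}$ yields
\[
\sum_{\alpha \in \cA'} v_{\alpha} c_{\alpha} \;\geq\; \sum_{\alpha \in A} \lambda_{\alpha}\,\frac{v_{\alpha} c_{\alpha}}{\lambda_{\alpha}} \;\geq\; \prod_{\alpha \in A} v_{\alpha}^{\lambda_{\alpha}} \cdot |c_{\beta}|,
\]
so it remains to verify $\prod_{\alpha \in A} v_{\alpha}^{\lambda_{\alpha}} \geq v_{\beta}$.

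The main obstacle is that the $\lambda$ produced by the AG representation need not have affinely independent support, while the hypothesis supplies the required inequality only at circuits. The crucial observation is that $\mu \mapsto \sum_{\alpha \in A} \mu_{\alpha}\ln v_{\alpha}$ is linear on the compact polytope
\[
P \;=\; \Big\{ \mu \in \R_+^A \,:\, \sum_{\alpha \in A} \mu_{\alpha}\alpha = \beta,\; \sum_{\alpha \in A} \mu_{\alpha} = 1 \Big\},
\]
and therefore attains its minimum at a vertex; the vertices of $P$ are precisely the weight vectors $\lambda(A',\beta)$ for affinely independent $A' \subseteq A$ with $\beta \in \relinter \conv A'$, i.e.\ circuits in $I(\cA)$. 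The hypothesis lower-bounds this linear function by $\ln v_{\beta}$ at every vertex and hence at the given $\lambda$. The degenerate case where $v_{\alpha_0} = 0$ and $\lambda_{\alpha_0} > 0$ for some $\alpha_0 \in A$ is resolved via a vertex decomposition $\lambda = \sum_i \mu_i \lambda^{(i)}$ of $P$: some vertex $\lambda^{(i)}$ has $\alpha_0$ in its support, and the corresponding circuit inequality forces $v_{\beta} = 0$, making the desired inequality trivial.
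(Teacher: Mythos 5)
Your argument is correct, but note that the paper itself offers no proof of this statement at all -- it is imported verbatim as Theorem 3.5 of \cite{knt-2019} -- so there is no internal proof to compare against; what you have written is a self-contained derivation from ingredients the paper does state, namely Proposition~\ref{thm:oddImplication} and the circuit-number criterion~\eqref{eq:circuitnumber}. The two directions are handled soundly: the forward direction tests $\vb$ against the extremal circuit function $\sum_{\alpha\in A}(\lambda_\alpha/v_\alpha)|x|^\alpha-\bigl(\prod_{\alpha\in A}v_\alpha^{-\lambda_\alpha}\bigr)|x|^\beta$, whose circuit number exactly saturates~\eqref{eq:circuitnumber}, and your scaling family for the case $v_{\alpha_0}=0$ correctly forces $v_\beta=0$. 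The real content is in the reverse direction, where the $\lambda$ supplied by Proposition~\ref{thm:oddImplication} need not be supported on an affinely independent set; your reduction via the polytope $P=\{\mu\in\R_+^A:\sum_\alpha\mu_\alpha\alpha=\beta,\ \sum_\alpha\mu_\alpha=1\}$ is the right move, since the vertices of $P$ are precisely the basic feasible solutions, whose supports are affinely independent with $\beta$ in the relative interior of their convex hull, i.e.\ circuits of $I(\cA)$, and linearity of $\mu\mapsto\sum_\alpha\mu_\alpha\ln v_\alpha$ transfers the circuit inequalities from the vertices to the given $\lambda$. You also correctly isolate the only delicate point, namely that this linear-programming argument breaks when some $v_{\alpha_0}=0$ with $\lambda_{\alpha_0}>0$, and your vertex decomposition resolves it. This is essentially the standard proof one would give (and, in spirit, the one in \cite{knt-2019}); the only gain here is that the paper's reader gets to see why the reduction to \emph{circuits} (rather than arbitrary convex representations of $\beta$) is legitimate, which the paper takes on faith.
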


\subsection{Second-order formulations}
Let $[m]$ abbreviate the set $\{1, \ldots, m\}$ and denote
by $\|\cdot\|$ the Euclidean norm.
A \emph{second-order cone program} \emph{(SOCP)} is an optimization problem 
of the form
\begin{equation}
  \label{eq:SOCP}
	\min \left\{ \cb^T \mathbf{x} \, : \,
   ||A_i \xb +\bb_i||_2 \le \cb_i^T \bb +\db_i \text{ for all }i\in[m] \right\}
\end{equation}
with real symmetric matrices $A_i$,
vectors $\bb_i,\cb_i,\db_i$ and a vector $\cb$.
A subset of $\R^n$ is called \emph{second-order representable} if it can be represented
as a projection of the feasible set of a second-order program.

For a symmetric
$2 \times 2$-matrix, positive semidefiniteness can be formulated as a
second-order condition.

\begin{lemma}{\rm (See, e.g., \cite[\S 6.4.3.8]{nesterov-nemirovski},
\cite[Lemma 4.3]{wang-magron-second-order}.)}\label{lem:SDP-SOCP}
	A symmetric $2\times 2$ matrix $A=\left(\begin{array}{cc} a & b\\ b & c
	\end{array}\right)$ is positive semidefinite if and only if the second-order condition
	\[
	 \left|\left|\left(\begin{array}{c} 2b \\ a-c
		\end{array}\right) \right|\right|_2 \le a+c
	\]
	is satisfied.
\end{lemma}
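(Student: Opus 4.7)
The plan is to reduce both sides to the same pair of scalar inequalities, namely $a+c\ge 0$ and $ac-b^2\ge 0$, via elementary characterizations.

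First, I would invoke the standard fact that a symmetric $2\times 2$ matrix is positive semidefinite if and only if both of its eigenvalues are non-negative, which in turn is equivalent to its trace and determinant being non-negative. This gives the equivalent conditions
\[
a+c \ge 0 \quad \text{and} \quad ac - b^2 \ge 0.
\]

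Next, I would analyze the second-order condition. Since the Euclidean norm on the left-hand side is non-negative, the inequality $\|(2b,\,a-c)^T\|_2 \le a+c$ already forces $a+c \ge 0$. Under this sign condition, squaring both sides is an equivalence, and one obtains
\[
4b^2 + (a-c)^2 \le (a+c)^2.
\]
Expanding $(a+c)^2 - (a-c)^2 = 4ac$ reduces this to $4b^2 \le 4ac$, that is, $ac - b^2 \ge 0$. Hence the second-order condition is equivalent to the pair $\{a+c\ge 0,\; ac-b^2\ge 0\}$.

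Combining the two equivalences yields the claim. The only subtlety (hardly an obstacle) is to justify squaring, which is why I would extract the sign condition $a+c\ge 0$ from the SOC inequality first, before passing to the squared form.
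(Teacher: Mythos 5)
Your argument is correct and complete: the reduction of positive semidefiniteness to $a+c\ge 0$ and $ac-b^2\ge 0$ via the (real) eigenvalues, and the observation that the second-order inequality forces $a+c\ge 0$ before squaring, together give the equivalence, since $(a+c)^2-(a-c)^2=4ac$. The paper itself states this lemma without proof, citing standard references, and your derivation is exactly the standard elementary argument one would find there, so there is nothing to compare beyond noting that you have supplied the omitted details correctly.
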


Let $S_+^n$ be the subset of symmetric $n \times n$-matrices which are 
positive semidefinite.
By \cite{averkov-2019}, there exists some $m\in\N$ so that the cone of SONC polynomials $C_{\text{SONC}}(\mathcal{A})$ supported on
$\mathcal{A}$ can be written as the projection of the spectrahedron $(S_+^2)^m\cap H$ for some affine space $H$. 
	
\section{A second-order representation for the cone of non-negative AG functions and its dual\label{se:agfunctions}}

In order to provide a second-order representation for the $\mathcal{S}$-cone and its
dual, the main task is to capture the cone of non-negative AG functions and its dual.
For a comprehensive collection of techniques for handling second-order cones,
we refer to \cite{ben-tal-nemirovski-book}.

Throughout the section,
let $(A,\beta)$ be a fixed circuit and rational
barycentric coordinates $\lambda\in\R_+^A$,
which represent $\beta$ as a convex combination of $A$. That is, 
$\beta = \sum_{\alpha \in \mathcal{A}} \lambda_{\alpha} \alpha$ and $\sum_{\alpha \in \mathcal{A}} \lambda_{\alpha} =1$.
Let $p\in\N$ denote the smallest common denominator of the fractions $\lambda_\alpha$ for $\alpha\in A$, i.e., $\lambda_\alpha=\frac{p_\alpha}{p}$ with $p_\alpha\in\N$
for all $\alpha\in A$ and $p$ is minimal.
 
With the given circuit $(A,\beta)\in I(\cA)$, we associate a set of \emph{dual circuit variables}
	\begin{align}\label{eq:circuitvars}
	(y_{k,i})_{k,i},
	\end{align}
where $k\in [\lceil \log_2(p)\rceil -1]$ and 
$i\in [2^{\lceil \log_2(p)\rceil-k}]$. 
The collection of these $\sum_{k=1}^{\lceil \log_2(p)\rceil-1} 
2^{\lceil \log_2(p)\rceil-k} $ $= 2^{\lceil \log_2(p)\rceil}-2$ variables is
denoted as $\yb^{A,\beta}$ or shortly as $\yb$. Further, denote
the restriction of a vector $\vb \in \R^{\cA}$ to the components of 
$A \subseteq \cA$ by $\vb_{|A}$.

\begin{definition}\label{def_circuitmatrix}
 A \emph{dual circuit matrix} $C_{A,\beta}^*(\vb_{| A},v_\beta,\yb)$ is a 
 block diagonal matrix consisting of the blocks 
	\begin{align}\label{type1}
	\left(\begin{array}{cc}
y_{k-1,2i-1} & y_{k,i}  \\
	y_{k,i} & y_{k-1,2i} 
	\end{array}\right)
	\quad 
	\text{for $k\in \{2,\ldots,\lceil \log_2(p)\rceil-1 \}$ and $i\in [2^{\lceil \log_2(p)\rceil-k}]$},
	\end{align}
	\begin{equation}\label{specialtypeeven}
	\left(\begin{array}{cc}
	y_{\lceil \log_2(p)\rceil-1,1} &  v_\beta \\
	v_\beta & y_{\lceil \log_2(p)\rceil-1,2}
	\end{array}\right),
	\end{equation}
	the singleton block
	$		
	( v_\beta ),
	$
	as well as $2^{\lceil \log_2(p)\rceil-1}$ blocks of the form
	\begin{align}\label{type2}
	\left(\begin{array}{cc}
     u & y_{1,l}  \\
	y_{1,l} &	w 
	\end{array}\right)
	\quad
	\text{ for } l\in [2^{\lceil \log_2(p)\rceil-1} ],
	\end{align}
	where in each of these blocks $u$ and $w$ represent a variable of the set $\{v_\alpha \, : \, \alpha\in A\}\cup\{v_\beta\}$
	such that altogether each $v_{\alpha}$ appears $p_{\alpha}$ times and
	$v_\beta$ appears $2^{\lceil \log_2(p)\rceil}-p$ times. 
\end{definition}

In this definition, the exact order of appearances of the variables in
$\{v_\alpha \, : \, \alpha\in A\}\cup\{v_\beta\}$ is not uniquely determined.
However, since this order of appearances will not matter, we will
speak of \emph{the} dual circuit matrix.

\begin{rem}
Each block of the type~\eqref{type2} contains two (not necessarily identical)
variables from the set $\{v_\alpha \, : \, \alpha\in A\}\cup\{v_\beta\}$.
Since $\sum_{\alpha \in A} \lambda_{\alpha} = 1$, we have 
$\sum_{\alpha \in A} p_{\alpha} = p$ and hence the total number
of occurrences of variables from the set $\{v_\alpha \, : \, \alpha\in A\}\cup\{v_\beta\}$
in the blocks of type~\eqref{type2} is
\[
  \sum_{\alpha \in A} p_{\alpha} + (2^{\lceil \log_2(p)\rceil}-p) 
  =  2^{\lceil \log_2(p)\rceil},
\]
which is twice the number of blocks of type~\eqref{type2}.
\end{rem}

Note that 
every $y_{k,i}$ only serves as an auxiliary variable to make the non-linear 
constraints
$
\ln (v_\beta) \le \sum\nolimits_{\alpha\in A} \lambda_\alpha \ln(v_\alpha ) 
$
of the dual $\Sc$-cone description from Proposition~\ref{prop:equiv}
linear. In the end, we will only multiply those 
constraints to obtain the original ones.
In particular, factors $v_\beta$ serve to cover cases where 
$p$ is not a power of $2$.
For the purpose of the second-order descriptions, it does 
not matter in which order the variables appear in the blocks~\eqref{type2},
because only the product of these blocks will be considered.

The goal of this subsection is to show the following characterization of the cone of non-negative even AG functions $\Pe{A,\beta}$ supported on the circuit $(A,\beta)$.
Here, positive semi\-defi\-nite\-ness of a symmetric matrix is denoted by 
$\succeq 0$.

\begin{thm} \label{thm:projDualCirc}
	The dual cone $(\Pe{A,\beta})^*$ of the cone of non-negative even AG functions 
	$\Pe{A,\beta}$ supported on the circuit $(A,\beta)\in I(\cA)$ 
	is the projection of the spectrahedron
	\begin{align}
	  \label{eq:spectrahedron-dual}
	&	\left\{(\vb, \yb) \in \R^{\cA}\times \R^{2^{\lceil \log_2(p)\rceil}-2} \ : \ C_{A,\beta}^*(\vb_{| A},v_\beta, \yb) \succcurlyeq 0 \right\}
	\end{align}
	 on $(\vb_{|A}, v_\beta)$.
	 $(\Pe{A,\beta})^*$ is second-order representable.
\end{thm}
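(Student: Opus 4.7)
The plan is to combine the characterization of $(\Pe{A,\beta})^*$ from Proposition~\ref{prop:equiv} with the standard Ben-Tal--Nemirovski tower-of-$2\times 2$-blocks representation of weighted geometric means. Set $K := \lceil \log_2(p)\rceil$ and $N := 2^K$, so $N$ is the smallest power of $2$ that is at least $p$.

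First, I would unfold the dual description. Applied to the single circuit $(A,\beta)$, Proposition~\ref{prop:equiv} together with the uniqueness of $\lambda$ yields
\[
 (\Pe{A,\beta})^* \ = \ \bigl\{\vb \in \R^{\cA} \, : \, \vb \ge 0,\ v_\beta \le \prod_{\alpha \in A} v_{\alpha}^{\lambda_{\alpha}}\bigr\},
\]
and the key point is that the inequality can be rewritten as the polynomial inequality
\[
  v_\beta^{N} \ \le \ v_\beta^{N-p} \cdot \prod_{\alpha \in A} v_{\alpha}^{p_{\alpha}}
\]
after clearing denominators and padding with $v_\beta^{N-p}$ so that the exponents on the right-hand side sum to the power of two $N$. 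This padding is exactly what the construction in Definition~\ref{def_circuitmatrix} bookkeeps: in the blocks of type~\eqref{type2} each $v_\alpha$ appears $p_\alpha$ times and $v_\beta$ appears $N-p$ times among the diagonal entries.

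Next I would prove the two inclusions for the projection of the spectrahedron~\eqref{eq:spectrahedron-dual}. For the direction ``PSD $\Rightarrow$ dual cone'': the singleton block $(v_\beta)$ and the diagonals of the type~\eqref{type2} blocks force $\vb \ge 0$. Applying the $2\times 2$ PSD condition $b^2 \le ac$ to each block yields $y_{1,\ell}^2 \le u_\ell w_\ell$ at level $1$, $y_{k,i}^2 \le y_{k-1,2i-1}y_{k-1,2i}$ at level $k$, and $v_\beta^2 \le y_{K-1,1}\,y_{K-1,2}$ at the top. Taking products of the level-$1$ inequalities gives $\prod_{\ell=1}^{N/2} y_{1,\ell}^2 \le v_\beta^{N-p}\prod_{\alpha \in A} v_\alpha^{p_\alpha}$, and a telescoping induction up the binary tree of type~\eqref{type1} blocks gives $v_\beta^{N} \le \prod_{\ell=1}^{N/2} y_{1,\ell}^{2}$, so combining the two chains produces the desired inequality. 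For the converse, I would give an explicit realization: assume $\vb \in (\Pe{A,\beta})^*$, set $y_{1,\ell} := \sqrt{u_\ell w_\ell}$ for each type~\eqref{type2} block, and inductively $y_{k,i} := \sqrt{y_{k-1,2i-1}y_{k-1,2i}}$. Then each type~\eqref{type1} and type~\eqref{type2} block is PSD by construction, and a direct computation shows $y_{K-1,1}y_{K-1,2}$ equals the geometric mean $\bigl(v_\beta^{N-p}\prod_\alpha v_\alpha^{p_\alpha}\bigr)^{2/N}$, which is at least $v_\beta^2$ precisely because $v_\beta \le \prod_\alpha v_\alpha^{\lambda_\alpha}$. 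Hence the special block~\eqref{specialtypeeven} is PSD as well.

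Finally, second-order representability is immediate from Lemma~\ref{lem:SDP-SOCP}: each $2\times 2$ PSD block is equivalent to a three-dimensional second-order cone constraint, and the singleton block is a linear non-negativity constraint, so the spectrahedron in~\eqref{eq:spectrahedron-dual} is already a second-order representable set, and projections preserve this property. The principal obstacle is purely combinatorial: one must verify carefully that the product of the $N/2$ diagonal pairs in the type~\eqref{type2} blocks gives exactly $v_\beta^{N-p}\prod_{\alpha \in A} v_\alpha^{p_\alpha}$ (which relies on the multiplicity accounting in Definition~\ref{def_circuitmatrix}) and that the binary tree of type~\eqref{type1} blocks telescopes to produce $v_\beta^N$; the remaining geometric-mean argument is then a routine application of the tower-of-squares trick from \cite{ben-tal-nemirovski-book}.
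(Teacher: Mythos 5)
Your proposal is correct and follows essentially the same route as the paper: it unfolds Proposition~\ref{prop:equiv} into the cleared-denominator inequality $v_\beta^{p}\le\prod_{\alpha\in A}v_\alpha^{p_\alpha}$, proves the forward inclusion by telescoping the determinantal conditions up the binary tree of blocks (the paper's Lemma~\ref{lem:2inclusion}), and proves the converse by the same explicit assignment $y_{1,l}=\sqrt{uw}$, $y_{k,i}=\sqrt{y_{k-1,2i-1}y_{k-1,2i}}$ (the paper's Lemma~\ref{lem:1inclusion}), with second-order representability from Lemma~\ref{lem:SDP-SOCP}. No substantive difference from the paper's argument.
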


Here, 
the second-order representability follows immediately from the 
representation~\eqref{eq:spectrahedron-dual} in connection with 
Lemma~\ref{lem:SDP-SOCP}.
Let us consider an example for the theorem.

\begin{example}
	Let $\cA=\{0,6\}, \cB=\{2\}$ and consider the circuit $(A,\beta)$ with $A=\cA$ and $\beta=2$ (compare Figure \ref{fig:circuit1}). We have $p=3, p_0=2, p_6=1$ and $\yb$
	consists of the components
	\[
	\ y_{1,1}, \ y_{1,2}. 
	\]
	A vector $(v_0,v_2,v_6)$ is contained 
	in $(\Pe{A,\beta})^*$ if and only if $v_2\ge 0$ and the three
	$2 \times 2$-matrices
	\[	\left(\begin{array}{cc}
	y_{1,1} & v_2 \\
	v_2 &	y_{1,2} 
	\end{array} \right), \; \left(\begin{array}{cc}
	v_0 & y_{1,1} \\
	y_{1,1} & v_0 
	\end{array} \right), \;
	\left(\begin{array}{cc}
	v_6 & y_{1,2} \\
	y_{1,2} & v_2  
	\end{array}\right)
	\]
	are positive semidefinite.
\end{example}

In \cite{averkov-2019}, Averkov considered the size of the blocks in the SDP-representation of SONC-polynomials but does not give a number or bound
on the number of blocks. Here, for the $\mathcal{S}$-cone,
we provide a bound on the number of inequalities of a second-order 
representation, which also gives a bound on the number of 
$2 \times 2$-blocks in a semidefinite representation. The bound
depends on the smallest common denominator of the barycentric 
coordinates representing the inner exponent of a circuit as a convex 
combination of the outer ones.

\begin{cor} \label{le:sizedual}
The matrix $C_{A,\beta}^*(\vb_{|A},v_\beta,\yb)$ consists of
${2^{\lceil \log_2(p)\rceil}}-1$ blocks of size $2 \times 2$
and one block of size $1 \times 1$.
\end{cor}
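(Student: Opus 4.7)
The plan is to prove the corollary by a direct counting argument that inspects Definition~\ref{def_circuitmatrix} type-by-type and sums the number of blocks of each kind. Writing $N := \lceil \log_2(p) \rceil$ for brevity, the block-diagonal matrix has four sources of blocks, three of which contribute $2\times 2$ blocks and one of which contributes the unique $1\times 1$ block.

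First I would count the $2\times 2$ blocks of type~\eqref{type1}. For each fixed $k\in\{2,\ldots,N-1\}$, the range $i\in[2^{N-k}]$ supplies $2^{N-k}$ blocks, so the total count is the geometric sum
\begin{equation*}
  \sum_{k=2}^{N-1} 2^{N-k} \;=\; \sum_{j=1}^{N-2} 2^{j} \;=\; 2^{N-1} - 2,
\end{equation*}
valid when $N\ge 2$ (the corner case $N\le 1$ amounts to an empty sum, as is standard). Next, the matrix~\eqref{specialtypeeven} contributes exactly one $2\times 2$ block, and the blocks of type~\eqref{type2} contribute exactly $2^{N-1}$ many by the indexing $l\in[2^{N-1}]$. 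Finally, the singleton $(v_\beta)$ is the only $1\times 1$ block.

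Adding up the $2\times 2$ blocks gives
\begin{equation*}
  (2^{N-1}-2) + 1 + 2^{N-1} \;=\; 2\cdot 2^{N-1} - 1 \;=\; 2^{N} - 1,
\end{equation*}
which matches the claimed count $2^{\lceil \log_2(p)\rceil}-1$, and the solitary singleton block accounts for the remaining $1\times 1$ block. There is no real obstacle here beyond bookkeeping; the only subtle point is the geometric-series collapse and guarding against the degenerate regime where $\lceil \log_2(p)\rceil$ is small, in which case the type~\eqref{type1} family is empty and one should verify that the remark following Definition~\ref{def_circuitmatrix} (showing that the total variable count in the type~\eqref{type2} blocks is $2\cdot 2^{N-1}$) is consistent with the count above, which serves as an internal sanity check on the construction.
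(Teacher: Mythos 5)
Your counting argument is correct and coincides with the paper's own proof: both sum the geometric series $\sum_{k=2}^{\lceil\log_2(p)\rceil-1}2^{\lceil\log_2(p)\rceil-k}=2^{\lceil\log_2(p)\rceil-1}-2$ for the blocks of type~\eqref{type1}, add the single block~\eqref{specialtypeeven} and the $2^{\lceil\log_2(p)\rceil-1}$ blocks of type~\eqref{type2}, and account separately for the singleton $(v_\beta)$. Your extra remark about the degenerate regime of small $\lceil\log_2(p)\rceil$ is a harmless refinement the paper does not spell out.
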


\begin{proof}
Counting the number of $2 \times 2$-blocks,
there are
$\sum_{k=2}^{\lceil \log_2(p)\rceil-1}\left(2^{\lceil \log_2(p)\rceil-k} \right)
  =  2^{\lceil \log_2(p)\rceil-1}$ $-2$ blocks of type~\eqref{type1}, a single block (\ref{specialtypeeven}) and 
  $2^{\lceil \log_2(p)\rceil-1}$
  blocks of type~\eqref{type2}.
\end{proof}
  
\begin{rem}\label{rem:ineq}
It is useful to record the set inequalities characterizing the
positive semidefiniteness of the matrix $C_{A,\beta}^*(\vb_{|A},v_\beta,\yb)$. Besides the non-negativity conditions
for the variables,
\begin{align}
		\vb_{|A} & \ge 0, 
        \quad 
		v_\beta \ge 0,\label{ineq2}\\
		\text{ and } x_{k,i} & \ge 0 
 \text{ for all } k\in \{2,\ldots,\lceil \log_2(p)\rceil-1\}, i\in [2^{\lceil 
 \log_2(p)\rceil}-k],\label{ineq3}
\end{align}
these are the determinantal conditions arising from the positive semidefiniteness
of the matrices in~\eqref{type1}, \eqref{specialtypeeven} and \eqref{type2}:
\begin{align}
		v_\beta^2 & \le  {y_{\lceil \log_2(p)\rceil-1,1}y_{\lceil \log_2(p)\rceil-1,2}},\label{ineq4}\\
		y_{k,i}^2& \le y_{k-1,2i-1} y_{k-1,2i} 
\text{ for all }k\in \{2,\ldots,\lceil \log_2(p)\rceil-1\},
i\in [2^{\lceil \log_2(p)\rceil-k}] \label{ineq6}\\
			\text{ and } uw &\ge 
\left(y_{1,l}\right)^2 \label{ineq7} \text{ for }  l\in [2^{\lceil \log_2(p)\rceil-1}]
			\end{align}
			for $u,w\in \{v_\alpha \, : \, \alpha\in A\}\cup
			\{v_\beta\}$, such that $v_{\alpha}$  appears 
			$p_{\alpha}$ times for every $\alpha\in A$ 
			and $v_\beta$ appears $2^{\lceil \log_2(p)\rceil}-p$ times.
\end{rem}

The next lemma prepares one inclusion of Theorem \ref{thm:projDualCirc}.

\begin{lemma}\label{lem:2inclusion}
Let $\vb \in\R^{A,\beta}$ such that there exists $\yb \in \R^{2^{\lceil \log_2(p)\rceil }-2}$ 
with $C_{A,\beta}^*(\vb_{|A},v_\beta, \yb) \succcurlyeq 0$.
Then $\vb_{|A}$ is non-negative and satisfies
	\begin{align*}
		v_\beta^p \le \prod\limits_{\alpha\in A} v_\alpha^{p_\alpha}.
	\end{align*}
\end{lemma}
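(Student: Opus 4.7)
The plan is to combine the determinantal inequalities listed in Remark~\ref{rem:ineq} along the binary-tree structure encoded in the dual circuit matrix. Write $q := \lceil \log_2 p \rceil$. Non-negativity of $\vb_{|A}$ is part of~\eqref{ineq2}, and together with~\eqref{ineq3} this ensures every quantity appearing below is non-negative, so squaring inequalities and taking products of them is legitimate.

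First, I would prove by induction on $j \in \{1,\ldots,q-1\}$ the intermediate bound
$$v_\beta^{2^j} \le \prod_{i=1}^{2^j} y_{q-j,i}.$$
The base case $j=1$ is precisely~\eqref{ineq4}. For the inductive step, squaring the level-$j$ bound and then applying~\eqref{ineq6} with $k = q-j$ to each factor (using $y_{q-j,i}^2 \le y_{q-j-1,2i-1}\,y_{q-j-1,2i}$) yields the level-$(j+1)$ bound. This telescopes down the tree to level $1$.

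Squaring the bound at $j = q-1$ once more and substituting the leaf inequalities~\eqref{ineq7} from the $2^{q-1}$ blocks of type~\eqref{type2} gives
$$v_\beta^{2^q} \le \prod_{l=1}^{2^{q-1}} y_{1,l}^2 \le \prod_{l=1}^{2^{q-1}} u_l w_l.$$
By the counting rule in Definition~\ref{def_circuitmatrix}---each $v_\alpha$ appears $p_\alpha$ times and $v_\beta$ appears $2^q - p$ times in these $2^q$ slots---this product equals $v_\beta^{2^q - p}\prod_{\alpha\in A} v_\alpha^{p_\alpha}$. Hence $v_\beta^{2^q} \le v_\beta^{2^q - p}\prod_{\alpha \in A} v_\alpha^{p_\alpha}$. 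If $v_\beta>0$, dividing by $v_\beta^{2^q-p}$ yields the claimed $v_\beta^p \le \prod_{\alpha\in A} v_\alpha^{p_\alpha}$; if $v_\beta = 0$, the inequality is immediate from $\vb_{|A}\ge 0$.

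The argument is essentially a bookkeeping exercise on the binary tree. The only real subtlety is matching variable multiplicities at the leaves with the exponents $p_\alpha$ together with the $2^q - p$ extra copies of $v_\beta$, which is exactly the counting stipulated by Definition~\ref{def_circuitmatrix} and verified in the remark following it. I would not anticipate any further obstacle.
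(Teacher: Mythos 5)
Your proof is correct and follows essentially the same route as the paper's: both telescope the determinantal inequalities \eqref{ineq4}, \eqref{ineq6}, \eqref{ineq7} down the binary tree of blocks and then use the multiplicity count from Definition~\ref{def_circuitmatrix} to identify the leaf product with $v_\beta^{2^{\lceil\log_2 p\rceil}-p}\prod_{\alpha\in A}v_\alpha^{p_\alpha}$. Your version merely phrases the chain as an induction with integer powers instead of fractional roots and makes the final division by $v_\beta^{2^{\lceil\log_2 p\rceil}-p}$ (with the $v_\beta=0$ case) explicit, which the paper leaves implicit.
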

\begin{proof}
  By~\eqref{ineq2}, we have $\vb_{|A} \ge 0$ and
  $v_{\beta} \ge 0$.
Moreover, \eqref{ineq4} and successively applying~\eqref{ineq6} gives
	 \begin{eqnarray*}
	 v_\beta &  \le & \left( y_{\lceil \log_2(p)\rceil-1,1} \, y_{\lceil \log_2(p)\rceil-1,2}\right)^{1/2} \\
	 &	\le & \left( y_{\lceil \log_2(p)\rceil-2,1} \, y_{\lceil \log_2(p)\rceil-2,2} \right)^{1/4}
	  \left( y_{\lceil \log_2(p)\rceil-2,3} \, y_{\lceil \log_2(p)\rceil-2,4}\right)^{1/4} \\ [0.5ex]
	 & = & \left(y_{\lceil \log_2(p)\rceil-2,1} \, 
	 y_{\lceil \log_2(p)\rceil-2,2} \, 
	 y_{\lceil \log_2(p)\rceil-2,3} \, 
	 y_{\lceil \log_2(p)\rceil-2,4}\right)^{\frac{1}{2^{\lceil \log_2(p)\rceil-(\lceil \log_2(p)\rceil-2)}}}\\ [1ex]
	 & \le & \cdots 
	 \le  \left(\left(\prod\nolimits_{\alpha\in A} v_\alpha^{p_\alpha}\right)\cdot \left(v_\beta\right)^{2^{\lceil \log_2(p)\rceil}-p} \right)^{\frac{1}{2^{\lceil \log_2(p)\rceil}}}.
	 \end{eqnarray*}
	 This is equivalent to 
	 \begin{align*}
	 	\left(v_\beta\right)^{2^{\lceil \log_2(p)\rceil}} \cdot \left(v_\beta\right)^{p-2^{\lceil \log_2(p)\rceil}} \le \prod\nolimits_{\alpha\in A} v_\alpha^{p_\alpha},
	 \end{align*}
	 which implies
	 $
	 v_\beta^p \le \prod_{\alpha \in A} v_\alpha^{p_\alpha}.
	 $
\end{proof}

Now we prepare the converse inclusion of Theorem~\ref{thm:projDualCirc}.

\begin{lemma} \label{lem:1inclusion}
	For every $\vb \in\R^{A,\beta}$ with $\vb_{|A \cup \{\beta\}} \ge 0$ and $v_\beta^p \le \prod_{\alpha\in A} v_\alpha^{p_\alpha}$, there exists $\yb \in\R^{2^{\lceil \log_2(p)\rceil}-2}$ 
such that $C_{A,\beta}^*(\vb_{|A},v_\beta, \yb) \succcurlyeq 0$. 
\end{lemma}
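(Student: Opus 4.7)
The plan is to construct an explicit $\yb$ realizing $C_{A,\beta}^*(\vb_{|A},v_\beta,\yb) \succcurlyeq 0$ by propagating geometric means upward through the block structure, which is the standard Ben-Tal--Nemirovski trick for representing rational powers. The inequalities listed in Remark~\ref{rem:ineq} make the matrix condition equivalent to the collection of determinantal inequalities~\eqref{ineq4}--\eqref{ineq7} (together with the non-negativity~\eqref{ineq2}--\eqref{ineq3}), so it suffices to exhibit non-negative $y_{k,i}$ verifying each of them.

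Assume first that $v_\alpha > 0$ for every $\alpha \in A$. I will define $\yb$ level by level, starting from the bottom blocks~\eqref{type2} and working up toward the block~\eqref{specialtypeeven}. For each $l\in[2^{\lceil \log_2(p)\rceil-1}]$, the $l$-th block of type~\eqref{type2} has two diagonal entries $u_l, w_l \in \{v_\alpha : \alpha \in A\}\cup\{v_\beta\}$; set
\[
y_{1,l} \ := \ \sqrt{u_l\, w_l}\, ,
\]
which makes~\eqref{ineq7} hold with equality. For $k = 2,\ldots,\lceil \log_2(p)\rceil-1$ and $i\in[2^{\lceil \log_2(p)\rceil-k}]$, recursively define
\[
y_{k,i} \ := \ \sqrt{y_{k-1,2i-1}\, y_{k-1,2i}}\, ,
\]
so that~\eqref{ineq6} is satisfied as equality at every internal level. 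All these $y_{k,i}$ are manifestly non-negative, verifying~\eqref{ineq3}, and~\eqref{ineq2} holds by hypothesis.

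It remains to verify the top-level inequality~\eqref{ineq4}. Unrolling the recursion,
$y_{\lceil \log_2(p)\rceil-1,1}\,y_{\lceil \log_2(p)\rceil-1,2}$ equals the $2^{\lceil \log_2(p)\rceil}$-th root of a product in which every diagonal entry of the bottom blocks appears exactly once. By the counting convention of Definition~\ref{def_circuitmatrix}, each $v_\alpha$ for $\alpha\in A$ appears $p_\alpha$ times and $v_\beta$ appears $2^{\lceil \log_2(p)\rceil}-p$ times, hence
\[
y_{\lceil \log_2(p)\rceil-1,1}\,y_{\lceil \log_2(p)\rceil-1,2} \ = \ \Bigl(\prod_{\alpha\in A} v_\alpha^{p_\alpha}\Bigr)^{1/2^{\lceil\log_2(p)\rceil-1}} \cdot v_\beta^{(2^{\lceil\log_2(p)\rceil}-p)/2^{\lceil\log_2(p)\rceil-1}}.
\]
Using the hypothesis $\prod_{\alpha\in A} v_\alpha^{p_\alpha} \ge v_\beta^p$, the right-hand side is bounded below by $v_\beta^{p/2^{\lceil\log_2(p)\rceil-1}}\cdot v_\beta^{(2^{\lceil\log_2(p)\rceil}-p)/2^{\lceil\log_2(p)\rceil-1}} = v_\beta^2$, which is precisely~\eqref{ineq4}.

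The only subtlety, and the one potential obstacle, is the degenerate case where some $v_\alpha = 0$. If $\prod_{\alpha\in A} v_\alpha^{p_\alpha} = 0$, the hypothesis forces $v_\beta = 0$. In that case, set $y_{k,i} := 0$ for all $k,i$; every $2\times 2$ block then has zero off-diagonal entries and non-negative diagonal entries, hence is positive semidefinite, and the $1\times 1$ singleton block $(v_\beta) = (0)$ is trivially PSD. If $v_\beta=0$ but the product is positive, the same choice $\yb=0$ works directly. This completes the construction in all cases.
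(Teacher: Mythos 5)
Your construction is exactly the one the paper uses: define $y_{1,l}=\sqrt{u_l w_l}$ on the bottom blocks, propagate geometric means upward via $y_{k,i}=\sqrt{y_{k-1,2i-1}y_{k-1,2i}}$, and use the hypothesis $v_\beta^p\le\prod_{\alpha\in A}v_\alpha^{p_\alpha}$ only at the top level to verify~\eqref{ineq4}. The argument is correct; your explicit treatment of the degenerate case $v_\alpha=0$ is harmless extra care the paper omits, since the same square-root construction already covers it.
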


\begin{proof}
 Define $\yb$ inductively by
 \begin{align*}
 & y_{1,l}=\sqrt{uw} \text{ for those } u,w \text{ which occur in the block with }y_{1,l}, \\
 & 	y_{k,i}=\sqrt{y_{k-1,2i-1}y_{k-1,2i}} \text{ for all }k\in \{2,\ldots,\lceil \log_2(p)\rceil-1\}, i\in [2^{\lceil \log_2(p)\rceil-k}]. 
 \end{align*}
 It suffices to show that the inequalities~\eqref{ineq2}-\eqref{ineq7} in 
 Remark~\ref{rem:ineq} are satisfied.
 The non-negativity conditions~\eqref{ineq2} and~\eqref{ineq3} 
 hold by assumption and by definition of $\yb$.
 The construction of $\yb$ also implies that
 a subchain of the chain of inequalities considered in the previous proof
 even holds with equality,
 	 \begin{eqnarray*}
	 && \left( y_{\lceil \log_2(p)\rceil-1,1} \, y_{\lceil \log_2(p)\rceil-1,2}\right)^{1/2} \\
	 &	= & \left( y_{\lceil \log_2(p)\rceil-2,1} \, y_{\lceil \log_2(p)\rceil-2,2} \right)^{1/4}
	  \left( y_{\lceil \log_2(p)\rceil-2,3} \, y_{\lceil \log_2(p)\rceil-2,4}\right)^{1/4} \\ [0.5ex]
	 & = & \left(y_{\lceil \log_2(p)\rceil-2,1} \, 
	 y_{\lceil \log_2(p)\rceil-2,2} \, 
	 y_{\lceil \log_2(p)\rceil-2,3} \, 
	 y_{\lceil \log_2(p)\rceil-2,4}\right)^{\frac{1}{2^{\lceil \log_2(p)\rceil-(\lceil \log_2(p)\rceil-2)}}}\\ [1ex]
	 & = & \cdots 
	 = \left(\left(\prod\nolimits_{\alpha\in A} v_\alpha^{p_\alpha}\right)\cdot \left(v_\beta\right)^{2^{\lceil \log_2(p)\rceil}-p} \right)^{\frac{1}{2^{\lceil \log_2(p)\rceil}}}.
	 \end{eqnarray*}
  By the assumption $v_\beta^p \le \prod_{\alpha\in A} v_\alpha^{p_\alpha}$,
  we obtain 
  $v_\beta^2 \le  {y_{\lceil \log_2(p)\rceil-1,1}y_{\lceil \log_2(p)\rceil-1,2}}$,
  which shows inequality~\eqref{ineq4}.
 The remaining inequalities~\eqref{ineq6}, \eqref{ineq7} 
 are satisfied with equality by construction.
\end{proof}

Finally, we can conclude the proof of Theorem \ref{thm:projDualCirc}. 
\begin{proof}[Proof of Theorem \ref{thm:projDualCirc}]
	Let $p$ be defined as in Definition \ref{def_circuitmatrix} and $\lambda\in\R^A$ denote the barycentric coordinates representing $\beta$ as a convex combination of $A$, i.e., $\lambda_\alpha=\frac{p_\alpha}{p}$ with $p_\alpha\in \N$ for all $\alpha\in A$. By~\eqref{eq:p-a-beta-even} and Proposition~\ref{prop:equiv}, we have
	\begin{align*}
		(\Pe{A,\beta})^* & =\left\{\vb\in\R^{A,\beta} \, : \, \vb_{|A\cup\{\beta\}} \ge 0, \;\ln(v_\beta)\le \sum\nolimits_{\alpha\in A} \lambda_\alpha \ln(v_\alpha )\right\}\\
		& = \left\{\vb\in\R^{A,\beta} \, : \, \vb_{|A\cup\{\beta\}}\ge 0, \; v_\beta^p \le \prod\nolimits_{\alpha\in A} v_\alpha^{p_\alpha}\right\}.
	\end{align*}
Applying Lemmas \ref{lem:2inclusion} and \ref{lem:1inclusion}, we obtain that $C^*_{A,\beta}(x,v_\beta)\succcurlyeq 0$ if and only if $\vb \in P_{A,\beta}^*$.
	\end{proof}
	
	Our derivation of the second-order representation of the dual cone
	$(\Pe{A,\beta})^*$ also suggests a simple way to
	derive a second-order cone representation of the primal 
	cone $\Pe{A,\beta}$.
	For the dual cone, Proposition~\ref{prop:equiv} gives -- besides 
	non-negativity-constraints on $v_\alpha$ for $\alpha\in\cA$ and on $v_{\beta}$ --  
	the condition
	$
		\ln(v_\beta) \le \sum\nolimits_{\alpha\in A} \lambda_\alpha \ln(v_\alpha ) 
     $
    for every circuit 
    $(A,\beta)\in I(\cA)$.
	Those conditions can -- as done in the previous proof -- be stated as 
	\begin{align*}
	v_\beta^p \le \prod_{\alpha\in A} v_\alpha^{p_\alpha},
	\text{ where }
	\lambda_\alpha=\frac{p_\alpha}{p} \, .	
	\end{align*}
	
	The conditions for the primal cone can be reformulated similarly. 
	Namely, by~\eqref{eq:circuitnumber},
	an even circuit function $f$ with
	coefficient vector $\cb$ is non-negative if and only if
	$
	    -c_{\beta} \le 
	    \prod_{\alpha \in A} \left( c_{\alpha} / \lambda_{\alpha}  \right)^{\lambda_{\alpha}},
	$
	which we write as
	\begin{align*}
	 (-c_\beta)^p \le \prod_{\alpha\in A} \left(\frac{c_\alpha}{\lambda_\alpha}\right)^{p_\alpha}.
	\end{align*}
	
	 This motivates to carry over the definition of the dual circuit matrix to the
	 primal case as follows. Since $c_\beta$ may be negative (in contrast to the 
	 dual case), we introduce 
  the \emph{primal circuit variables}, or simply \emph{circuit variables},
	 	\begin{align*}
	 		(x_\beta, 
	 		(x_{k,i})_{k,i}),
	 	\end{align*}
	 	where $k\in [\lceil \log_2(p)\rceil]$ and $i\in [2^{\lceil \log_2(p)\rceil -k}]$. As in the dual case, we refer to these $1+\sum_{k=1}^{\lceil\log_2(p)\rceil}2^{\lceil\log_2(p)\rceil -k}=2^{\lceil \log_2(p)\rceil}$ variables as $\xb^{A,\beta}$ or shortly as $\xb$.
	 
	\begin{definition}[Circuit matrix]
		The \emph{circuit matrix} $C_{A,\beta}(\cb_{|A\cup\{\beta\}},x_\beta,\xb)$ is 
		the block diagonal matrix consisting of the blocks 
			\begin{align*}
			& \left(\begin{array}{cc}
			x_{k-1,2i-1} & x_{k,i}   \\
			x_{k,i} & x_{k-1,2i} 
			\end{array}\right)
			\quad \text{ for } k\in \{2,\ldots, \lceil \log_2(p)\rceil \}, \ i\in [2^{\lceil \log_2(p)\rceil-k}], \end{align*}
			the two singleton blocks
			\begin{align} \label{eq:onebyone2} \left(\begin{array}{c}
			x_{\lceil \log_2(p)\rceil,1} -	\left(\prod\nolimits_{\alpha\in A} (\lambda_\alpha)^{\lambda_\alpha}\right)x_\beta \end{array}\right), 
			\quad \left(\begin{array}{c}
			x_\beta+c_\beta
			\end{array}\right),
			\end{align}
			as well as $2^{\lceil \log_2(p)\rceil-1}$ blocks of the form
			\begin{align}
			\label{eq:primalblocks3}
			\left(\begin{array}{cc}
			u & x_{1,l}  \\
			x_{1,l} &	 w 
			\end{array}\right) \quad \text{ for } l\in [2^{\lceil \log_2(p)\rceil-1} ],
			\end{align}
			where $u,w \in \{c_\alpha \, : \, \alpha\in A\}\cup\{\left(\prod\nolimits_{\alpha\in A} (\lambda_\alpha)^{\lambda_\alpha}\right)x_\beta\}$, 
such that $c_{\alpha}$ appears $p_{\alpha}$ times for every $\alpha\in A$ and $\left(\prod\nolimits_{\alpha\in A} (\lambda_\alpha)^{\lambda_\alpha}\right)x_\beta$ appears $2^{\lceil \log_2(p)\rceil}-p$ times. 
	\end{definition}
	
	Note that for a circuit $(A,\beta)$, the product $\left(\prod\nolimits_{\alpha\in A} (\lambda_\alpha)^{\lambda_\alpha}\right)$ is always non-zero, because 
	$\beta \in \relinter \conv A$ and $A$
	consists of affinely independent vectors.
	
 In contrast to the dual cone, there is no sign constraint on $c_\beta$ in the
 primal cone. If $p$ is not a power of $2$, then
$x_\beta$ appears on the main diagonal of~\eqref{eq:primalblocks3}. In our
coupling of $x_\beta$ with $c_{\beta}$, the constraint $x_\beta +c_\beta \ge 0$ 
results in $-c_\beta\le x_\beta$ and thus reflects these sign considerations.
	
Note that the primal cone consists of circuit \emph{functions}, whereas in our
definition of the dual cone, the elements are coefficient vectors.
Therefore, the projection regarded in Theorem \ref{thm:projDualCirc} only delivers 
the coefficients of the circuit functions rather than the cone itself.
	
\begin{thm} \label{thm:projCirc}
	The set of coefficients of the cone $\Pe{A,\beta}$ of non-negative even
	circuit polynomials supported on the circuit $(A,\beta)$ coincides with the projection
    of the spectrahedron
	\begin{align}
	  \label{eq:primal-proj}
	& \widehat{\Pe{A,\beta}} :=	\left\{(\cb, \xb) \in \R^{\cA}\times \R^{2^{\lceil \log_2(p)\rceil }} \, : \, C_{A,\beta}(\cb_{|A\cup\{\beta\}},x_\beta, \xb) \succcurlyeq 0 ,
	\ c_{|\cA\setminus \left(A\cup\{\beta\}\right)}=0\right\}
	\end{align}
	on $(\cb_{|A},c_\beta)$.
	The cone $\Pe{A,\beta}$ is second-order representable.
\end{thm}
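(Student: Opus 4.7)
The plan is to mirror the strategy of Theorem~\ref{thm:projDualCirc} and its preparatory Lemmas~\ref{lem:2inclusion} and~\ref{lem:1inclusion}, but on the primal side, exploiting the circuit number characterisation~\eqref{eq:circuitnumber}. After recalling that $f\in \Pe{A,\beta}$ is equivalent to $c_\alpha\ge 0$ for $\alpha\in A$, $c_{|\cA\setminus(A\cup\{\beta\})}=0$, and $(-c_\beta)^p\le \prod_{\alpha\in A}(c_\alpha/\lambda_\alpha)^{p_\alpha}$ (the last inequality being vacuous when $c_\beta\ge 0$), my first step is to translate $C_{A,\beta}(\cb_{|A\cup\{\beta\}},x_\beta,\xb)\succcurlyeq 0$ into an explicit inequality system, exactly as Remark~\ref{rem:ineq} does for the dual cone: non-negativity of the diagonal entries of all blocks, the two singleton inequalities $x_\beta+c_\beta\ge 0$ and $x_{\lceil \log_2(p)\rceil,1}\ge\bigl(\prod_\alpha \lambda_\alpha^{\lambda_\alpha}\bigr)x_\beta$, and the determinantal inequalities $x_{k,i}^2\le x_{k-1,2i-1}\,x_{k-1,2i}$ at the tree nodes and $x_{1,l}^2\le u_l w_l$ at the leaves.

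For the inclusion $\widehat{\Pe{A,\beta}}\to\Pe{A,\beta}$, I would chain precisely the telescoping estimate of Lemma~\ref{lem:2inclusion}, but starting one level higher, at the singleton block. Combining $\bigl(\prod_\alpha\lambda_\alpha^{\lambda_\alpha}\bigr)x_\beta\le x_{\lceil \log_2(p)\rceil,1}$ with successive squarings of the determinantal inequalities and, at the bottom, with the leaf inequalities, produces
\[
\Bigl(\bigl(\prod\nolimits_\alpha\lambda_\alpha^{\lambda_\alpha}\bigr)x_\beta\Bigr)^{2^{\lceil \log_2(p)\rceil}}\le \prod\nolimits_{\alpha\in A} c_\alpha^{p_\alpha}\cdot\Bigl(\bigl(\prod\nolimits_\alpha\lambda_\alpha^{\lambda_\alpha}\bigr)x_\beta\Bigr)^{2^{\lceil \log_2(p)\rceil}-p},
\]
which simplifies to $x_\beta^p\le \prod_{\alpha\in A}(c_\alpha/\lambda_\alpha)^{p_\alpha}$. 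Combined with $x_\beta\ge -c_\beta$, this is exactly the circuit number inequality in the relevant sign regime, so $\cb$ is the coefficient vector of some $f\in\Pe{A,\beta}$.

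For the converse inclusion, starting from $f\in\Pe{A,\beta}$ with coefficients $\cb$, I would set $x_\beta:=\max\{0,-c_\beta\}$ and then define the remaining coordinates of $\xb$ bottom-up via $x_{1,l}:=\sqrt{u_l w_l}$ at the leaves and $x_{k,i}:=\sqrt{x_{k-1,2i-1}x_{k-1,2i}}$ at the interior nodes, exactly as in Lemma~\ref{lem:1inclusion}. All determinantal inequalities then hold with equality by construction, so the telescoping degenerates to the identity $x_{\lceil \log_2(p)\rceil,1}^{2^{\lceil \log_2(p)\rceil}}=\prod_{\alpha\in A}c_\alpha^{p_\alpha}\cdot\bigl((\prod_\alpha\lambda_\alpha^{\lambda_\alpha})x_\beta\bigr)^{2^{\lceil \log_2(p)\rceil}-p}$. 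Taking $2^{\lceil \log_2(p)\rceil}$-th roots and using the circuit number inequality on $\cb$ shows that the remaining singleton condition $x_{\lceil \log_2(p)\rceil,1}\ge (\prod_\alpha\lambda_\alpha^{\lambda_\alpha})x_\beta$ is satisfied, while $x_\beta+c_\beta\ge 0$ holds by the choice of $x_\beta$. The second-order representability of the projection is then immediate from Lemma~\ref{lem:SDP-SOCP}, which expresses the positive semidefiniteness of each $2\times 2$ block by a single second-order cone constraint.

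The main obstacle is the careful bookkeeping forced by the primal-dual asymmetry: unlike $v_\beta$ in the dual case, $c_\beta$ may be negative, and whenever $p$ is not a power of~$2$ the factor $(\prod_\alpha\lambda_\alpha^{\lambda_\alpha})x_\beta$ actually appears on a main diagonal of one of the leaf blocks~\eqref{eq:primalblocks3}. One therefore has to check that the PSD conditions on those leaf blocks really do force $x_\beta\ge 0$, that the choice $x_\beta=\max\{0,-c_\beta\}$ threads both singleton inequalities consistently in both sign regimes of $c_\beta$, and that the telescoping product is valid with these non-negative substitutions. Once these sign and parity checks are dealt with, the argument is otherwise a direct transcription of the dual case.
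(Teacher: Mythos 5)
Your proposal is correct and follows essentially the same route as the paper's proof: translate positive semidefiniteness of the blocks into the determinantal inequality system, telescope through the binary tree to recover the circuit-number inequality $(-c_\beta)^p \le \prod_{\alpha\in A}(c_\alpha/\lambda_\alpha)^{p_\alpha}$ for one inclusion, and define $\xb$ bottom-up by geometric means for the other, with Lemma~\ref{lem:SDP-SOCP} giving second-order representability. The only deviation is your witness $x_\beta:=\max\{0,-c_\beta\}$ in the converse direction, where the paper instead sets $x_\beta:=x_{\lceil \log_2(p)\rceil,1}\prod_{\alpha\in A}(1/\lambda_\alpha)^{\lambda_\alpha}$ (i.e.\ the circuit number); both choices satisfy all blocks, and yours has the minor advantage of being defined before the leaves that may contain $x_\beta$ when $p$ is not a power of $2$.
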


The last equality constraint in~\eqref{eq:primal-proj} is redundant and
can be omitted. We include it here, because this formulation is needed
in Section \ref{sec:wholeScone} for the description of the $\Sc$-cone supported on the full set 
$\cA$.

\begin{proof} 
First, let $(\cb,\xb)\in\widehat{\Pe{A,\beta}}$.
The positive semidefiniteness of the $2\times 2$-blocks in $C_{A,\beta}(\cb_{|A\cup\{\beta\}}, $ $x_\beta, \xb)$ imply the inequalities
		\begin{align*}
		  \cb_{|A} \ge 0 \text{ and }  (-x_\beta)^p \cdot \left(\prod\nolimits_{\alpha\in A} {\lambda_\alpha}^{\lambda_\alpha}\right)
		\le \prod\nolimits_{\alpha\in A} c_\alpha^{p_\alpha}.
		\end{align*}
		The two $1\times 1$-blocks from~\eqref{eq:onebyone2}
		give the inequalities
		$
			x_{\lceil \log_2(p)\rceil,1} \ge 	\left(\prod\nolimits_{\alpha\in A} \lambda_\alpha^{\lambda_\alpha}\right)x_\beta\text{ and }
			x_\beta \ge -c_\beta.
		$
		They imply
		$
			-c_\beta\left(\prod\nolimits_{\alpha\in A} \lambda_\alpha^{\lambda_\alpha}\right) \le x_\beta \left(\prod\nolimits_{\alpha\in A} \lambda_\alpha^{\lambda_\alpha}\right) \le  	x_{\lceil \log_2(p)\rceil,1}.
		$ Hence, similar to Lemma \ref{lem:2inclusion},
		 \begin{eqnarray*}
			&& x_\beta \left(\prod\nolimits_{\alpha\in A} \lambda_\alpha^{\lambda_\alpha}\right)\le  	x_{\lceil \log_2(p)\rceil,1}\ \le \ \left( x_{\lceil \log_2(p)\rceil-1,1} \, x_{\lceil \log_2(p)\rceil-1,2}\right)^{1/2} \\
			&	\le & \left( x_{\lceil \log_2(p)\rceil-2,1} \, x_{\lceil \log_2(p)\rceil-2,2} \right)^{1/4}
			\left( x_{\lceil \log_2(p)\rceil-2,3} \, x_{\lceil \log_2(p)\rceil-2,4}\right)^{1/4} \\ [0.5ex]
			& = & \left(x_{\lceil \log_2(p)\rceil-2,1} \, 
			x_{\lceil \log_2(p)\rceil-2,2} \, 
			x_{\lceil \log_2(p)\rceil-2,3} \, 
			x_{\lceil \log_2(p)\rceil-2,4}\right)^{\frac{1}{2^{\lceil \log_2(p)\rceil-(\lceil \log_2(p)\rceil-2)}}}\\ [1ex]
			& \le & \cdots 
			\le  \left(\left(\prod\nolimits_{\alpha\in A} c_\alpha^{p_\alpha}\right)\cdot \left(x_\beta\right)^{2^{\lceil \log_2(p)\rceil}-p}\left(\prod\nolimits_{\alpha\in A} \lambda_\alpha^{\lambda_\alpha}\right)^{2^{\lceil \log_2(p)\rceil}-p} \right)^{\frac{1}{2^{\lceil \log_2(p)\rceil}}}.
		\end{eqnarray*}
		This is equivalent to 
		\begin{align*}
		\left(x_\beta\right)^{2^{\lceil \log_2(p)\rceil}}\cdot \left(\prod\nolimits_{\alpha\in A} \lambda_\alpha^{\lambda_\alpha}\right)^{2^{\lceil \log_2(p)\rceil}} \cdot \left(x_\beta\right)^{p-2^{\lceil \log_2(p)\rceil}}\cdot \left(\prod\nolimits_{\alpha\in A} \lambda_\alpha^{\lambda_\alpha}\right)^{p-2^{\lceil \log_2(p)\rceil}} \le \prod\nolimits_{\alpha\in A} c_\alpha^{p_\alpha},
		\end{align*}
		which, together with the considerations before the chain of inequalities, yields
		$
		(-c_\beta)^p \le \prod_{\alpha \in A} (c_\alpha/\lambda_{\alpha})^{p_\alpha}
		$
		and further $\cb_{|A \cup \{\beta\}} \in \Pe{A,\beta}$.
		
		For the converse inclusion, we remind the reader that $\lambda_\alpha >0$ for all $\alpha \in A$. We set $x_\beta := x_{\lceil \log_2(p)\rceil,1}\left(\prod\nolimits_{\alpha\in A} \left(\frac{1}{\lambda_\alpha}\right)^{\lambda_\alpha}\right) $ and, 
		similar to the proof of Lemma~\ref{lem:1inclusion},
		 define $\xb$ inductively by
		\begin{align*}
		& x_{1,l}=\sqrt{uw} \text{ for those } u,w \text{ which occur in the block with }x_{1,l}, \\
		& 	x_{k,i}=\sqrt{x_{k-1,2i-1}x_{k-1,2i}} \text{ for all }k\in \{2,\ldots,\lceil \log_2(p)\rceil\}, i\in [2^{\lceil \log_2(p)\rceil-k}].
		\end{align*}
   Analogous to that proof, the construction of $\xb$
    gives $C_{A,\beta}(\cb_{A \cup \{\beta\}},x_{\beta},\xb) \succeq 0$.
    
     Second-order
	representability is then an immediate consequence in view of
	Lemma~\ref{lem:SDP-SOCP}.
\end{proof}

\begin{example}
	Let $\cA=\{0,2\}$, $\cB=\{1\}$ and consider the circuit $(A,\beta)$ with $A=\cA$ and $\beta=1$. Since
	\[
	1=\frac{1}{2}\cdot 0+ \frac{1}{2}\cdot 2,
	\]
	we have $p_1=p_2=1$ and $p=2$. 
	Hence, $\lceil \log_2(p) \rceil = \log_2(p)= 1$, 
	$2^{\lceil \log_2(p) \rceil}-p=2-p=0$ as well as
	\begin{align*}
	\prod\limits_{\alpha\in A}  \lambda_\alpha^{\lambda_\alpha} =\frac{1}{2} 
	\; \text{ and } \; \xb=\left(\begin{array}{c}
	x_1 \\
	x_{1,1}
	\end{array}\right).
	\end{align*}
	A given vector $(c_0,c_1,c_2)$ is contained in $P_{\mathcal{A},\beta}$
	if and only if  
	\begin{align*}
	x_{1,1}  - \frac{1}{2}x_1 \ge 0, \;
	x_{1}+c_1 \ge 0
	\; \text{ and } \;
	 \left(\begin{array}{cc}
	  c_0 & x_{1,1} \\
	  x_{1,1} & c_2 
	\end{array}\right) \succeq 0.
	\end{align*}
\end{example}

Similar to Lemma \ref{le:sizedual}, we can determine the number of 
blocks.

\begin{cor} The matrix $C_{A,\beta}(\cb_{|A\cup \{\beta\}},x_\beta,\xb)$ consists of ${2^{\lceil \log_2(p)\rceil}}-1$ blocks of size $2\times 2$ and two blocks of size $1\times 1$.
\end{cor}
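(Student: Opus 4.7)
The plan is to mirror the counting argument used in the proof of Corollary~\ref{le:sizedual}, since the primal circuit matrix $C_{A,\beta}$ is constructed from three families of blocks analogous to (though not identical to) the dual case. Abbreviating $m := \lceil \log_2(p) \rceil$, I would read off directly from the definition that the blocks come in the following groups: (i) blocks of size $2 \times 2$ indexed by $k \in \{2, \ldots, m\}$ and $i \in [2^{m-k}]$; (ii) the two explicitly listed singleton blocks in~\eqref{eq:onebyone2}; and (iii) the $2^{m-1}$ blocks of the form~\eqref{eq:primalblocks3}.

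The main step is then to evaluate the geometric sum $\sum_{k=2}^{m} 2^{m-k}$ for the first family, which gives $2^{m-1}-1$. Adding the $2^{m-1}$ blocks from the third family yields $2^m - 1 = 2^{\lceil \log_2(p) \rceil} - 1$ blocks of size $2 \times 2$ in total. The two $1 \times 1$ blocks are supplied directly by~\eqref{eq:onebyone2}, giving the statement of the corollary.

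There is no genuine obstacle here; the only minor point of care is to notice that the index range for $k$ in the primal case runs up to $m$ (producing one more summand than in the dual case), whereas the dual had $k \in \{2,\ldots,m-1\}$ plus the special block~\eqref{specialtypeeven}. Correspondingly, the dual had one $1 \times 1$ block while the primal has two, but the total count of $2 \times 2$ blocks matches in both settings. Once this indexing detail is acknowledged, the proof is a one-line summation.
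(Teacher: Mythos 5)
Your count is correct and matches the paper's intended argument: the paper gives no separate proof here, merely remarking that the count proceeds ``similar to Lemma~\ref{le:sizedual}'', and your evaluation $\sum_{k=2}^{\lceil\log_2(p)\rceil} 2^{\lceil\log_2(p)\rceil-k} = 2^{\lceil\log_2(p)\rceil-1}-1$ plus the $2^{\lceil\log_2(p)\rceil-1}$ blocks of type~\eqref{eq:primalblocks3} and the two singletons of~\eqref{eq:onebyone2} is exactly that computation. Your observation about the shifted index range for $k$ (up to $\lceil\log_2(p)\rceil$ rather than $\lceil\log_2(p)\rceil-1$, compensating for the absence of the special block~\eqref{specialtypeeven}) is the right point of care and is handled correctly.
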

	
\section{A second-order representation of the \texorpdfstring{$\Scal$}{S}-cone and its dual}\label{sec:wholeScone}

In Section~\ref{se:agfunctions}, 
we obtained second-order representations of the subcones of non-negative even 
circuit functions and their duals, under the condition that the barycentric coordinates
are rational. We now assume that $\mathcal{A}$ and $\mathcal{B}$ are
rational and derive an explicit second-order representation of the rational
$\Scal$-cone $\CS(\cA,\cB)$ and its dual. In the primal case, those cones are 
obtained via projection and Minkowski sum, and in the dual case, they arise from
projection and intersection. First we consider the lifted cones for the dual case.

Taking all circuits $(A,\beta)$ into account would induce a highly redundant
representation. To avoid those redundancies, we make use of the 
following characterization from \cite{knt-2019} of the extreme rays of the 
$\mathcal{S}$-cone.

For finite and disjoint sets $\emptyset\ne\cA, \cB\subseteq\R^n$, the set of \emph{reduced} circuits contained in $\cA\cup\cB$ is the set

\begin{eqnarray*}
R(\mathcal{A},\cB) & = & \big\{ (A,\beta) \, : \,
A \subseteq \cA \text{ affinely independent}, \; \; \beta \in \relinter(\conv A) \cap (\cB\setminus A), \\
& & \; \cA\cap(\conv(A))\setminus(A\cup\{\beta\})=\emptyset \big\}.
\end{eqnarray*}

Less formally, this is the set of all circuits with outer exponents in $\cA$ and inner exponents in $\cB$ without additional support points contained in the convex hull of $A$. 

Note that for $\cA\subseteq\R^n$ and $\cB\subseteq\N^n\setminus(2\N)^n$ disjoint and finite, the set $R(\cA,\cA)$ is exactly the set of \emph{even reduced circuits} and the set $R(\cA,\cB)$ the set of \emph{odd reduced circuits}. The set $R(\cA,\cA\cup\cB)$ denotes the set of \emph{all} reduced circuits $(A,\beta)$ with $A\subseteq\cA$ and $ \beta\in\cA\cup\cB$. A circuit function supported on a reduced circuit in 
$R(\cA,\cA \cup \cB)$ has non-negative coefficients corresponding to exponents in $\cA$ and a possibly negative coefficient corresponding to a single exponent in $\cA\cup\cB$.

The question whether a circuit is reduced or not depends on the ground set $\cA$. For example, the circuit $(A,\beta)$ with $A=\left\{\left(\begin{array}{c}
0 \\ 0
\end{array}\right),\left(\begin{array}{c}
4 \\ 0
\end{array}\right), \left(\begin{array}{c}
0 \\ 2
\end{array}\right) \right\}$ and $\beta=\left(\begin{array}{c}
1 \\ 1
\end{array}\right)$ is \emph{reduced} for the ground set $\cA=A\cup\{\beta\}\cup\left\{{\left(\begin{array}{c}
	4 \\ 2
	\end{array}\right)} \right\}$ (compare Figure \ref{fig:reduced}), but 	\emph{not reduced} for $\cA=A\cup\{\beta\}\cup\left\{{\left(\begin{array}{c}
	{2} \\ {0}
	\end{array}\right)} \right\}$ (compare Figure \ref{fig:nonreduced}).
\begin{figure}
	\begin{minipage}{.5\textwidth}
		\centering
		\begin{tikzpicture}[scale=1.0,cap=round]
		
		\tikzstyle{axes}=[]
		\tikzstyle{important line}=[very thick]
		\tikzstyle{information text}=[rounded corners,fill=red!10,inner sep=1ex]
		
		\draw[style=help lines,step=0.5cm] (0,0) grid (2.2,1.2);
		
		\begin{scope}[style=axes]
		\draw[->] (-0.25,0) -- (2.25,0) node[right] {$x$};
		\draw[->] (0,-0.25) -- (0,1.25) node[above] {$y$};
		
		\end{scope}
		
		\draw[style=important line,purple]
		(0,0) -- (2,0) node[below=2pt] {$(4,0)^T$};
		
		\draw[style=important line,purple]
		(0,1) -- (0,0) node[below=2pt] {$(0,0)^T$};
		
		\draw[style=important line,purple]
		(0,1) node[left=2pt] {$(0,2)^T$} -- (2,0) ;
		
		\draw[style=important line,cyan]
		(.45,.45) -- (.55,.55) ;
		
		\draw[style=important line,cyan]
		(.45,.55) -- (.55,.45) ;
		
		\draw[style=important line,blue]
		(1.95,.95) -- (2.05,1.05) ;
		
		\draw[style=important line,blue]
		(1.95,1.05) -- (2.05,.95) ;
		
		\draw[draw=none,cyan]
		(4,1.5) node[left=2pt] {$(1,1)^T$} -- (4,1.5) ;
		
				\draw[draw=none,blue]
				(5.5,1.5) node[left=2pt] {$(4,2)^T$} -- (5.5,1.5) ;
		
		\end{tikzpicture}
		\caption{The circuit is reduced, as $(4,2)^T\notin\conv(A)$.} \label{fig:reduced}
	\end{minipage}%
	\begin{minipage}{.5\textwidth}
		\centering
		\begin{tikzpicture}[scale=1,cap=round]
		
		\tikzstyle{axes}=[]
		\tikzstyle{important line}=[very thick]
		\tikzstyle{information text}=[rounded corners,fill=red!10,inner sep=1ex]
		
		\draw[style=help lines,step=0.5cm] (0,0) grid (2.2,1.2);
		
		\begin{scope}[style=axes]
		\draw[->] (-0.25,0) -- (2.25,0) node[right] {$x$};
		\draw[->] (0,-0.25) -- (0,1.25) node[above] {$y$};
		
		\end{scope}
		
		\draw[style=important line,purple]
		(0,0) -- (2,0) node[below=2pt] {$(4,0)^T$};
		
		\draw[style=important line,purple]
		(0,1) -- (0,0) node[below=2pt] {$(0,0)^T$};
		
		\draw[style=important line,purple]
		(0,1) node[left=2pt] {$(0,2)^T$} -- (2,0) ;
		
		\draw[style=important line,cyan]
		(.45,.45) -- (.55,.55) ;
		
		\draw[style=important line,cyan]
		(.45,.55) -- (.55,.45) ;
		
		\draw[style=important line,blue]
		(0.95,.05) -- (1.05,-.05) ;
		
		\draw[style=important line,blue]
		(1.05,.05) -- (0.95,-.05) ;
		
		\draw[draw=none,cyan]
		(4,1.5) node[left=2pt] {$(1,1)^T$} -- (4,1.5) ;
		
				\draw[draw=none,blue]
				(5.5,1.5) node[left=2pt] {$(2,0)^T$} -- (5.5,1.5) ;

		\end{tikzpicture}
		\caption{The circuit is not reduced, as $(2,0)^T\in\conv(A)$.} \label{fig:nonreduced}
	\end{minipage}
\end{figure}

The following proposition is a direct consequence of Theorem 3.5(d) 
in~\cite{knt-2019}.

\begin{prop} Let $\emptyset\ne\cA\subseteq\R^n$ and $\cB\subseteq\N^n\setminus(2\N)^n$ be finite and disjoint sets. Then
	$$\CS(\cA,\cB)=\sum\limits_{(A,\beta)\in R(\cA,\cA)} \Pe{A,\beta} + \sum\limits_{(A,\beta)\in R(\cA,\cB)} \Po{A,\beta}.$$
\end{prop}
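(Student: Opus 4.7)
The inclusion $\supseteq$ is immediate from Definition~\ref{de:scone}: any non-negative even AG function supported on a circuit $(A,\beta)\in R(\cA,\cA)$ lies in $\CS(\cA,\cB)$ by construction, and likewise for odd reduced circuits. Thus the Minkowski sum on the right is contained in $\CS(\cA,\cB)$.

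For the nontrivial inclusion $\subseteq$, the plan is to invoke Theorem~3.5(d) of \cite{knt-2019}, which characterizes the extreme rays of the $\mathcal{S}$-cone: every extreme ray of $\CS(\cA,\cB)$ is generated either by an even circuit function $f\in \Pe{A,\beta}$ with $(A,\beta)\in R(\cA,\cA)$, or by an odd circuit function $f\in \Po{A,\beta}$ with $(A,\beta)\in R(\cA,\cB)$. Since $\CS(\cA,\cB)$ is a closed convex pointed cone (pointedness can be read off from the dual description in Proposition~\ref{prop:equiv}, whose non-negativity constraints on the dual variables imply that the primal is pointed), it equals the conic hull of its extreme rays. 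Hence every $f\in\CS(\cA,\cB)$ admits a finite non-negative decomposition into reduced even and odd circuit functions, and grouping the summands by their supporting circuit yields an element of the right-hand Minkowski sum.

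The step I expect to be the main obstacle, in unpacking the cited characterization, is the reduction from arbitrary circuits to \emph{reduced} ones. Starting from the basic decomposition $\CS(\cA,\cB)=\sum_{\alpha\in\cA}\Pe{\cA\setminus\{\alpha\},\alpha}+\sum_{\beta\in\cB}\Po{\cA,\beta}$ together with Proposition~\ref{thm:oddImplication}, a Carath\'eodory-type argument on the barycentric weights $\lambda$ produces decompositions supported on affinely independent subsets $A\subseteq\cA$. The remaining work is to split a circuit function on a non-reduced $(A,\beta)$, i.e., when there exists $\alpha'\in\cA\cap\conv(A)\setminus(A\cup\{\beta\})$: one re-expresses the barycentric combination of $\beta$ as a combination passing through $\alpha'$ and applies the arithmetic-geometric inequality to write the circuit function as a conic combination of circuit functions whose outer support is strictly smaller. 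Iterating this reduction terminates at reduced circuits, which gives the claimed representation.
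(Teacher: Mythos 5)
Your proposal takes essentially the same route as the paper: the paper's entire ``proof'' is the remark that the proposition is a direct consequence of Theorem~3.5(d) of \cite{knt-2019}, which is precisely the result you invoke, and your added elaboration (pointedness, decomposition into extreme rays, and the sketched reduction from arbitrary to reduced circuits) just unpacks that citation. This is correct as far as it goes, modulo the same convention the paper leaves implicit, namely that degenerate singleton ``circuits'' $(\alpha)$ contributing pure terms $c_\alpha|\xb|^\alpha$ are absorbed into the cones $\Pe{A,\beta}$ on the right-hand side.
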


Using this decomposition
theorem, we can exclude many circuits from our consideration.
Thus, the second-order program will be much smaller than the 
one considering all circuits.

In Section $3$, we only considered even circuits. To use Lemma~\ref{prop:SconeSAGE} and obtain the conditions for odd circuits as well, we extend the dual circuit variables for odd circuits to
\begin{align*}
	(y_\beta,(y_{k,i})_{k,i})
\end{align*}
for $k\in[2^{\lceil \log_2(p)\rceil} -1]$ and $i\in[2^{\log_2(p)-k}]$. We call them $\yb^{A,\beta}$ nevertheless for a fixed circuit $(A,\beta)\in R(\cA,\cB)$.

For the dual case, we consider the coordinates
\[
\yb^{\mathcal{A},\mathcal{B}} = 
\left\{ (\yb^{A,\beta}) \, : \, (A,\beta)\in R(\cA,\cA\cup\cB) \right\},
\]
which consist of $\sum_{(A,\beta)\in R(\cA,\cA\cup\cB)} 2^{\lceil \log_2(p_{A,\beta})\rceil}-1$ components,
where $p_{A,\beta}$ denotes the smallest common 
denominator of the barycentric coordinates
 $\lambda_{A,\beta}$ of the circuit $(A,\beta)$ representing 
$\beta$ as a convex combination of $A$.

For the primal case, we consider
\[
  \xb^{\mathcal{A},\mathcal{B}} = 
   \left\{ (\xb^{A,\beta}) \, : \, (A,\beta)\in R(\cA,\cA\cup\cB) \right\},
\]
which consist of $\sum_{(A,\beta)\in R(\cA,\cA\cup\cB)} 2^{\lceil \log_2(p_{A,\beta})\rceil}$ components. 

Using Lemma~\ref{prop:SconeSAGE}, we can use our earlier characterizations
of $\Pe{A,\beta}$ to obtain the following second-order characterization
for $\Po{A,\beta}$.

\begin{cor}
	Let $(A,\beta)\in R(\cA,\cB)$ an odd reduced circuit with rational 
	$A\subseteq\cA \subseteq \Q^n$ and $\beta\in\cB$.
	\begin{itemize}
		\item[(1)] Let $f$ be an odd AG function supported on $(A,\beta)$ with coefficient vector $\cb$. $f$ is non-negative if and only if there exists $\xb\in \R^{2^{\lceil\log_2(p) \rceil }}$ such that $C_{A,\beta}(\cb_{|A},x_\beta, \xb) \succcurlyeq 0$ and 
		\begin{align}\label{eq:retransformationScone}
		\left(\begin{array}{cc}
		x_\beta & c_\beta \\
		c_\beta & x_\beta
		\end{array}\right)\succcurlyeq 0.
		\end{align}
		\item[(2)] A vector $\vb\in\R^{A,\beta}$ is contained in 
		$\left(\Po{A,\beta}\right)^*$ if and only if there exist $\yb\in \R^{2^{\lceil\log_2(p) \rceil }-2}$ and $y_\beta\in\R$ such that $C_{A,\beta}^*(\vb_{|A},y_\beta, \yb) \succcurlyeq 0$ and
		\begin{align}\label{eq:retransformationDualScone}
		\left(\begin{array}{cc}
		y_\beta & v_\beta \\
		v_\beta & y_\beta
		\end{array}\right)\succcurlyeq 0.
		\end{align}
	\end{itemize}
\end{cor}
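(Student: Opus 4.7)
The strategy is to reduce the odd circuit case to the even case developed in Section~\ref{se:agfunctions}, exploiting the fact that by Proposition~\ref{thm:oddImplication} the only change from the even to the odd characterization is the replacement of $-c_\beta$ by $|c_\beta|$ in the primal, and an analogous absolute value in the dual. The auxiliary $2\times 2$ matrices in~\eqref{eq:retransformationScone} and~\eqref{eq:retransformationDualScone} are precisely the semidefinite encoding of the absolute-value constraints $x_\beta\ge|c_\beta|$ and $y_\beta\ge|v_\beta|$ (and the implicit non-negativities $x_\beta,y_\beta\ge 0$), thereby lifting the signed variables $c_\beta$ and $v_\beta$ to non-negative auxiliary variables to which the even-case machinery directly applies.

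For part~(1), assume the two matrix conditions hold. Applying the chain of inequalities from the proof of Theorem~\ref{thm:projCirc} to $C_{A,\beta}(\cb_{|A},x_\beta,\xb)\succcurlyeq 0$ yields $x_\beta^p \le \prod_{\alpha\in A}(c_\alpha/\lambda_\alpha)^{p_\alpha}$. Combining with $x_\beta\ge|c_\beta|$ from~\eqref{eq:retransformationScone} gives $|c_\beta|^p \le \prod_{\alpha\in A}(c_\alpha/\lambda_\alpha)^{p_\alpha}$, which by Proposition~\ref{thm:oddImplication}(2) is exactly the non-negativity of the odd AG function $f$. For the converse, I would mimic Lemma~\ref{lem:1inclusion}: set $x_\beta := \prod_{\alpha\in A}(c_\alpha/\lambda_\alpha)^{\lambda_\alpha}$, which is non-negative and satisfies $x_\beta\ge|c_\beta|$ by the odd non-negativity condition, and then construct the remaining components of $\xb$ inductively so that all $2\times 2$ blocks of $C_{A,\beta}$ hold with equality. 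The singleton block $(x_\beta+c_\beta)$ of $C_{A,\beta}$, originally tailored to the even case $x_\beta\ge -c_\beta$, is automatically satisfied since $|c_\beta|\ge -c_\beta$, so no modification of the circuit matrix is needed.

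For part~(2), I would proceed in parallel. First, from the dual characterization~\eqref{eq:dual-scone-sage1}--\eqref{eq:dual-scone-sage2} specialized to the single odd circuit $(A,\beta)\in R(\cA,\cB)$, together with Theorem~\ref{thm:projDualCirc} applied to the even case, one obtains
\[
(\Po{A,\beta})^* \ = \ \bigl\{\vb\in\R^{A,\beta} \ : \ \vb_{|A}\ge 0, \ |v_\beta|^p \le \prod\nolimits_{\alpha\in A} v_\alpha^{p_\alpha}\bigr\}.
\]
The matrix condition~\eqref{eq:retransformationDualScone} encodes $y_\beta\ge|v_\beta|$. Applying Lemmas~\ref{lem:2inclusion} and~\ref{lem:1inclusion} to $C^*_{A,\beta}(\vb_{|A},y_\beta,\yb)\succcurlyeq 0$ with $y_\beta$ playing the role of $v_\beta$ in the even case yields the inequality $y_\beta^p \le \prod_{\alpha\in A} v_\alpha^{p_\alpha}$ together with feasibility of $\yb$, and combining the two gives the claimed equivalence. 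The main subtlety throughout is the sign handling of $c_\beta$ and $v_\beta$; once this is absorbed into $x_\beta$ and $y_\beta$ via the additional $2\times 2$ blocks, the argument reduces cleanly to the even case and no structural change of $C_{A,\beta}$ or $C^*_{A,\beta}$ is required.
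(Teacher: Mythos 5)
Your proposal is correct and follows essentially the same route as the paper: both proofs read the extra $2\times 2$ block as the semidefinite encoding of $x_\beta\ge|c_\beta|$ (resp.\ $y_\beta\ge|v_\beta|$), reduce the primal claim to the circuit-number criterion of Proposition~\ref{thm:oddImplication}(2) via the chain of inequalities behind Theorem~\ref{thm:projCirc}, and reduce the dual claim to the even case of Theorem~\ref{thm:projDualCirc} through the SAGE-type lifting with an auxiliary $u\ge|v_\beta|$. Your explicit choice $x_\beta=\prod_{\alpha\in A}(c_\alpha/\lambda_\alpha)^{\lambda_\alpha}$ in the converse of part (1) is a slightly more concrete rendering of the same construction the paper leaves implicit.
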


Note that, as a consequence of the application of Lemma~\ref{prop:SconeSAGE},
the second argument of $C_{A,\beta}^*(\vb_{|A},y_\beta, \yb)$ is $y_\beta$ now instead of $v_\beta$ that we had in Theorem~\ref{thm:projDualCirc}.

\begin{proof}
(1) The semidefinite condition on the
    matrix~\eqref{eq:retransformationScone} is equivalent to
		$
			x_\beta\ge 0 \text{ and } |c_\beta| \le x_\beta.
		$
		Hence, altogether we obtain
		\begin{align*}
			f\in\Po{A,\beta} \ \text{ if and only if } \ |c_\beta| \le \prod\limits_{\alpha\in A}\left(\frac{c_\alpha}{\lambda}\right)^{\lambda_\alpha}
		\end{align*}
		for barycentric coordinates $\lambda\in\R_+^A$ decomposing $\beta$ as a convex combination of $A$. This is exactly
	Proposition~\ref{thm:oddImplication}(b).
	
(2) If $\vb \in (\Po{A,\beta})^*$, then, in 
 the notation of Theorem~\ref{eq:dual-scone-sage2}, 
 there exists some $u$ such that 
$(\vb,u) \in (\Pe{A,\beta})^*$ and $u\ge |v_\beta|$. 
In particular, $u\ge 0$ is necessary for containment in $\left(\Pe{A,\beta}\right)^*$. The semidefinite 
constraints~\eqref{eq:retransformationDualScone}
		are equivalent to $y_\beta\ge 0$ and the latter inequality $u\ge |v_\beta|$, and the constraint $C_{A,\beta}^*(\vb_{|A},y_\beta, \yb) \succcurlyeq 0$ is equivalent to $(\vb,y_\beta)\in\left(\Pe{A,\beta}\right)^*$ by Theorem~\ref{thm:projDualCirc}.
\end{proof}

For every \emph{odd} reduced circuit $(A,\beta)\in R(\cA,\cB)$,
define the block diagonal matrix $\widehat C_{A,\beta}^*(\vb_{|A\cup\{\beta\}},y_\beta,\yb)$ consisting of the dual circuit matrix $C_{A,\beta}^*(\vb_{|A\cup\{\beta\}},y_\beta,\yb)$ and (\ref{eq:retransformationScone}) for the dual cone.
Considering all the reduced circuits, these lifting matrices 
define the lifted cone
\begin{align*}
  \widehat{C}^*(\cA,\cB) =  \big\{ (\vb, \yb^{\mathcal{A},\mathcal{B}}) \ : \
	& \widehat C_{A,\beta}^*(\vb_{|A\cup\{\beta\}},y_\beta, \yb) \succcurlyeq 0 \text{ for all } (A,\beta)\in R(\cA,\cB),\\
 & 	 C_{A,\beta}^*(\vb_{|A},v_\beta, \yb) \succcurlyeq 0 \text{ for all } (A,\beta)\in R(\cA,\cA) \big\},
	\end{align*}
	where the variable vector $\vb$ lives in the space $\R^{\mathcal{A},\mathcal{B}}$.

	For a fixed odd reduced circuit $({A},{\beta})\in R(\cA,\cB)$, let
	\begin{align*}
		\widehat{\Po{A,\beta}}=
		\big\{(\cb,\xb^{\mathcal{A},\mathcal{B}}) \, : \, 
	\widehat 	C_{{A},{\beta}}(\cb_{|{A}\cup\{\beta\}},x_{{\beta}},\xb^{{A},{\beta}}) \succcurlyeq 0 , c_{|\cA\cup\cB\setminus(A\cup\{\beta\})}=0\big\},
	\end{align*}
	where $\widehat 	C_{{A},{\beta}}(\cb_{|{A}\cup\{\beta\}},x_{{\beta}},\xb^{{A},{\beta}})$ is defined analogous to the dual case. We define the lifted cone
	\begin{align*}
		\widehat{C}(\cA,\cB)=\sum\limits_{({A},{\beta})\in R(\cA,\cA)} \widehat{\Pe{{A},{\beta}}} + \sum\limits_{({A},{\beta})\in R(\cA,\cB)} \widehat{\Po{{A},{\beta}}}.
	\end{align*}
	Here, for every $(A,\beta)\in R(\cA,\cA)$, $\widehat{\Pe{{A},{\beta}}} $ is the set from Theorem \ref{thm:projCirc}.

\begin{cor}
	\begin{itemize}
		\item[(1)] The dual of the rational $\Scal$-cone $\CS^*(\cA,\cB)$ is the projection on the coordinates $\vb\in \R^{\cA,\cB}$ of $\widehat{C}^*(\cA,\cB)$.
		\item[(2)] The primal rational $\Sc$-cone $\CS(\cA,\cB)$ is the projection on the coordinates
		$\vb \in \R^{\cA,\cB}$ of $\widehat{C}(\cA,\cB)$.
	\end{itemize}
\end{cor}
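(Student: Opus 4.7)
The plan is to leverage the decomposition
\[
\CS(\cA,\cB) \;=\; \sum_{(A,\beta)\in R(\cA,\cA)} \Pe{A,\beta} \;+\; \sum_{(A,\beta)\in R(\cA,\cB)} \Po{A,\beta}
\]
from the preceding proposition together with the second-order representations already established for each circuit cone and its dual: Theorem~\ref{thm:projCirc} for the even primal cones $\Pe{A,\beta}$, Theorem~\ref{thm:projDualCirc} for the even dual cones $(\Pe{A,\beta})^*$, and the preceding corollary (built on Proposition~\ref{prop:SconeSAGE} together with the sign-handling matrices~\eqref{eq:retransformationScone} and~\eqref{eq:retransformationDualScone}) for the odd cones $\Po{A,\beta}$ and their duals. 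With these ingredients in place, both parts reduce to checking that projection commutes with the relevant set operation in the lifted space.

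For part (2), I would use that each $\widehat{\Pe{A,\beta}}$ and each $\widehat{\Po{A,\beta}}$ was set up with its own private block $\xb^{A,\beta}$ of auxiliary variables inside the common variable vector $\xb^{\cA,\cB}$, while the constraint $c_{|\cA\cup\cB\setminus(A\cup\{\beta\})}=0$ forces the coefficient part of each summand to be concentrated on $A\cup\{\beta\}$. Because the lifting variables are disjoint across circuits, the Minkowski sum $\widehat{C}(\cA,\cB)$ freely combines the auxiliary blocks and merely adds the coefficient vectors coordinate-wise, so projection onto the coefficient coordinates commutes with Minkowski sum and yields exactly $\sum \Pe{A,\beta} + \sum \Po{A,\beta} = \CS(\cA,\cB)$.

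For part (1), I would dualize. Each $\Pe{A,\beta}$ and $\Po{A,\beta}$ is a closed convex cone, so taking the dual of the Minkowski decomposition gives
\[
\CS^*(\cA,\cB) \;=\; \bigcap_{(A,\beta)\in R(\cA,\cA)} (\Pe{A,\beta})^* \;\cap\; \bigcap_{(A,\beta)\in R(\cA,\cB)} (\Po{A,\beta})^*.
\]
Each factor is the projection onto $\vb \in \R^{\cA,\cB}$ of a spectrahedron defined by the (possibly extended) dual circuit matrix, and again the auxiliary variables $\yb^{A,\beta}$ from distinct circuits are disjoint. Intersecting the projections over the shared $\vb$-coordinates is therefore the projection of the joint spectrahedron in which all $\yb^{A,\beta}$ are simultaneously present, and this joint spectrahedron is by definition $\widehat{C}^*(\cA,\cB)$.

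The main point that requires a little care is precisely these two commutation steps, i.e.\ projection versus Minkowski sum in (2) and projection versus intersection in (1); both reduce to the observation that the auxiliary variable blocks $\xb^{A,\beta}$ (respectively $\yb^{A,\beta}$) indexed by different reduced circuits are pairwise disjoint, so that there is no hidden coupling between the lifted descriptions of distinct circuits. Once this is noted, the two statements follow by combining the already-proved per-circuit theorems with the decomposition into reduced circuits.
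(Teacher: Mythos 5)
Your argument is correct and is exactly the (implicit) argument the paper intends: the corollary is stated there without proof, as a direct consequence of the reduced-circuit decomposition, the per-circuit Theorems~\ref{thm:projCirc} and~\ref{thm:projDualCirc} together with the odd-circuit corollary, and the two commutation facts you isolate — that a linear projection commutes with Minkowski sums, and that the dual of the sum is the intersection of the duals, whose projections can be intersected because the auxiliary blocks $\yb^{A,\beta}$ for distinct reduced circuits are uncoupled. Nothing is missing.
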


Applying this lifting to the second-order representations of 
Theorems~\ref{thm:projCirc} and~\ref{thm:projDualCirc} in standard form
also gives second-order representations of 
$C_{\mathcal{S}}(\mathcal{A},\mathcal{B})$
and
$C^*_{\mathcal{S}}(\mathcal{A},\mathcal{B})$
in standard form.

\begin{cor}[Second-order representation of the dual rational $\Sc$-cone]
  \label{co:dual1}
	A vector $\vb \in\R^{(\cA,\cB)}$ is contained in the rational $\Sc$-cone
	$(C_{\mathcal{S}}(\mathcal{A},\mathcal{B}))^*$ if and only if the circuit vector 
	$\yb^{\cA,\cB}$
	satisfies 
	for every reduced odd circuit $(A,\beta)\in R(\cA, \cB)$
	
	\begin{enumerate}
	\item
		$
		\left(\begin{array}{cc}
		y^{A,\beta}_{k-1,2i-1} & y^{A,\beta}_{k,i} \\
		y^{A,\beta}_{k,i} & y^{A,\beta}_{k-1,2i}
	\end{array}\right) \succeq 0,
	\quad 2 \le k \le\lceil \log_2(p_{A,\beta})\rceil-1 \; 
	\forall i\in [2^{\lceil \log_2(p_{A,\beta})\rceil-k}], \smallskip
    $
    \item
    $ 	\left(\begin{array}{cc}
    y^{A,\beta}_{\lceil \log_2(p_{A,\beta})\rceil -1,1} & y^{A,\beta}_\beta \\
    y^{A,\beta}_\beta & y^{A,\beta}_{\lceil \log_2(p_{A,\beta})\rceil -1,2}
    \end{array}\right) \succeq 0,
	$ \smallskip
	\item
	$
	\left(\begin{array}{c@{\:\:}c} 
	u & y^{A,\beta}_{1,l} \\
	y^{A,\beta}_{1,l} & w 
	\end{array} \right) \succeq 0	
	$
	for $l\in [2^{\lceil \log_2(p_{A,\beta})\rceil-1} ]$ and
	$u,w\in \{v_\alpha : \alpha\in A\}\cup\{y^{A,\beta}_\beta\}$, such that $v_{\alpha}$ appears $(p_{A,\beta})_{\alpha}$ times for each $\alpha\in A$ and $y^{A,\beta}_\beta$ appears $2^{\lceil \log_2(p_{A,\beta})\rceil}-(p_{A,\beta})_{\alpha}$ times,
	\item  
	$
	\left|\left|v_\beta \right|\right|_2 \le y^{A,\beta}_\beta, 
	$
		
	\end{enumerate}
	and for every reduced even circuit $(A,\beta)\in R(\cA, \cA)$ the conditions of Theorem~\ref{thm:projDualCirc}.
\end{cor}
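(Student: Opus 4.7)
The plan is to assemble the result from three ingredients already in place: (i) the preceding corollary stating that $(C_{\Sc}(\cA,\cB))^*$ is the projection of the lifted cone $\widehat{C}^*(\cA,\cB)$ onto the $\vb$-coordinates; (ii) the block structure of the matrices $C^*_{A,\beta}$ and $\widehat C^*_{A,\beta}$ as spelled out in Definition~\ref{def_circuitmatrix} together with the odd-circuit extension above; and (iii) Lemma~\ref{lem:SDP-SOCP}, which turns positive semidefiniteness of a symmetric $2\times 2$ block into a single second-order inequality. Thus the task reduces to reading off the blocks block-by-block and converting each $2\times 2$ PSD constraint into the corresponding SOCP constraint; the $1\times 1$ scalar blocks yield ordinary non-negativity inequalities.

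For the reduced \emph{even} circuits $(A,\beta)\in R(\cA,\cA)$, the lifting is $C^*_{A,\beta}(\vb_{|A},v_\beta,\yb^{A,\beta})\succcurlyeq 0$, and Theorem~\ref{thm:projDualCirc} already identifies the resulting projection with $(\Pe{A,\beta})^*$. Decomposing this matrix into blocks~\eqref{type1}, \eqref{specialtypeeven} and~\eqref{type2}, and invoking Lemma~\ref{lem:SDP-SOCP} on each $2\times 2$ block gives the second-order inequalities listed as ``the conditions of Theorem~\ref{thm:projDualCirc}.''

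For the reduced \emph{odd} circuits $(A,\beta)\in R(\cA,\cB)$, I would first recall that in the odd extension the role of the diagonal entry $v_\beta$ inside the circuit matrix is played by the extra variable $y^{A,\beta}_\beta$, exactly in accordance with Lemma~\ref{prop:SconeSAGE} and the preceding corollary on odd AG functions. Writing out the blocks of $C^*_{A,\beta}(\vb_{|A},y^{A,\beta}_\beta,\yb^{A,\beta})$ then yields precisely the three families (1), (2) and (3) in the statement. The extra block~\eqref{eq:retransformationDualScone}, which is appended to form $\widehat C^*_{A,\beta}$, is the symmetric $2\times 2$ matrix with diagonal entries $y^{A,\beta}_\beta$ and off-diagonal $v_\beta$; its positive semidefiniteness is equivalent via Lemma~\ref{lem:SDP-SOCP} to the scalar inequality $|v_\beta|\le y^{A,\beta}_\beta$, which is what (4) records (the notation $\|v_\beta\|_2$ reduces to $|v_\beta|$ for a scalar).

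The main obstacle, and the place where I would be careful, is the bookkeeping for odd circuits: one has to verify that the replacement $v_\beta \rightsquigarrow y^{A,\beta}_\beta$ inside the circuit matrix combined with the coupling block~\eqref{eq:retransformationDualScone} faithfully realizes the passage from $(\Pe{A,\beta})^*$ to $(\Po{A,\beta})^*$ that is encoded in Proposition~\ref{prop:SconeSAGE}, namely the existential quantifier $\exists \ub\ge |\wb|$ with $(\vb,\ub)\in C_{\sage}(\cA\cup\cB)^*$. Once this correspondence is checked, the final statement is obtained by simply taking the conjunction, over all reduced even and reduced odd circuits, of the second-order conditions produced by Lemma~\ref{lem:SDP-SOCP}; no further calculation is required.
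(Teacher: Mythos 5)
Your proposal is correct and follows essentially the same route the paper intends: it combines the preceding corollary (the dual $\Sc$-cone as the projection of the lifted cone $\widehat{C}^*(\cA,\cB)$ onto the $\vb$-coordinates) with the block decomposition of the dual circuit matrices and Lemma~\ref{lem:SDP-SOCP} applied block by block, with the odd-circuit coupling block~\eqref{eq:retransformationDualScone} yielding condition (4). The paper leaves this assembly implicit ("Applying this lifting \dots in standard form also gives second-order representations"), and your write-up simply makes that explicit, including the one point that genuinely needs checking, namely that the substitution $v_\beta \rightsquigarrow y^{A,\beta}_\beta$ together with the extra block realizes the existential quantification of Proposition~\ref{prop:SconeSAGE} — which is exactly what the corollary on odd circuits established.
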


We need to write $\yb^{A,\beta}$ instead of just writing $\yb$ in the previous corollary,
since different $\yb^{A,\beta}$ for every reduced circuit $(A,\beta)$ may appear.

For the primal case, we have to consider every reduced circuit as well. Here, sums
take the role of the intersections from the dual case.

\begin{cor}[A second-order representation of the rational $\Sc$-cone]
  \label{co:primal1}
     A function $f \in \R[\mathcal{A},\mathcal{B}]$ with coefficient vector 
     $\mathbf{c}$ is contained in the rational $\Sc$-cone 
	$C_{\mathcal{S}}(\mathcal{A}, \mathcal{B})$ if and only if there exists
	$\cb^{A,\beta}$ for $(A,\beta)\in R(\cA,\cA\cup\cB)$ 
	with
     $\cb=\sum\limits_{(A,\beta)\in R(\cA,\cA\cup\cB)}\cb^{A,\beta}$ and for the circuit vector $\xb^{\cA,\cB}$
	and for every $(A,\beta)\in R(\cA,\cA\cup\cB)$ the following inequalities hold.
	\begin{enumerate}
		\item
	$ \left(\begin{array}{cc} x^{A,\beta}_{k-1,2i-1} & x^{A,\beta}_{k,i} \\ x^{A,\beta}_{k,i} & x^{A,\beta}_{k-1,2i}
		\end{array}\right) \succcurlyeq 0 ,
		\; 2 \le k \le \lceil \log_2(p_{A,\beta})\rceil, \; i \in [2^{\lceil \log_2(p_{A,\beta})\rceil-k}],$ 
  \item
		$x^{A,\beta}_{\lceil \log_2(p_{A,\beta})\rceil,1} -\left(\prod\nolimits_{\alpha\in A} \lambda_\alpha^{(p{A,\beta})_\alpha}\right)x^{A,\beta}_\beta \ge 0,$
  \item $x^{A,\beta}_\beta +c_\beta \ge 0$,
	\item 
	$
	\left|\left|c_\beta \right|\right|_2 \le x^{A,\beta}_\beta
	$ if $(A,\beta)$ is an odd circuit,
	\item as well as in both the even and the odd case,
	\begin{align*}
	 \left(\begin{array}{cc} u & x^{A,\beta}_{1,l} \\ 	x^{A,\beta}_{1,l} & w
	\end{array}\right) \succcurlyeq 0
	\quad \text{ for } l\in [2^{\lceil \log_2(\lambda_{A,\beta})\rceil-1} ]
	\end{align*}
	for $u,w \in \{c_\alpha \, : \, \alpha\in A\}\cup\big\{\big(\prod\nolimits_{\alpha\in A} \lambda_\alpha^{(\lambda_{A,\beta})_\alpha}\big)x^{A,\beta}_\beta\big\}$, 
	such that $c_{\alpha}$ appears $(p_{A,\beta})_{\alpha}$ times for every $\alpha\in A$ and $\big(\prod\nolimits_{\alpha\in A} \lambda_\alpha^{(\lambda_{A,\beta})_\alpha}\big)x^{A,\beta}_\beta$ appears $2^{\lceil \log_2(p_{A,\beta})\rceil}-p_{A,\beta}$ times. 
	\end{enumerate}
\end{cor}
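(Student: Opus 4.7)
The plan is to combine the decomposition
$C_{\mathcal{S}}(\mathcal{A},\mathcal{B}) = \sum_{(A,\beta) \in R(\cA,\cA)} \Pe{A,\beta} + \sum_{(A,\beta) \in R(\cA,\cB)} \Po{A,\beta}$
from the proposition immediately preceding this corollary with the single-circuit spectrahedral representations derived earlier. The Minkowski sum over reduced circuits is captured in standard form by introducing auxiliary coefficient vectors $\cb^{A,\beta}$ indexed by $(A,\beta) \in R(\cA, \cA \cup \cB)$, subject to the linear constraint $\cb = \sum_{(A,\beta)} \cb^{A,\beta}$, and requiring each $\cb^{A,\beta}$ to lie in the circuit cone corresponding to $(A,\beta)$.

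For an even reduced circuit $(A,\beta) \in R(\cA,\cA)$, I invoke Theorem~\ref{thm:projCirc}: $\cb^{A,\beta} \in \Pe{A,\beta}$ holds exactly when there exists a primal circuit vector $\xb^{A,\beta}$ making the block-diagonal matrix $C_{A,\beta}(\cb^{A,\beta}_{|A\cup\{\beta\}}, x^{A,\beta}_\beta, \xb^{A,\beta})$ positive semidefinite. Reading off the blocks yields precisely the claimed inequalities: the upper-level $2 \times 2$ blocks give (1), the first singleton block of \eqref{eq:onebyone2} gives (2), the second singleton block of \eqref{eq:onebyone2} gives (3), and the bottom-level $2 \times 2$ blocks of \eqref{eq:primalblocks3} give (5). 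Condition (4) does not arise in the even case.

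For an odd reduced circuit $(A,\beta) \in R(\cA,\cB)$, I invoke part~(1) of the odd-case corollary preceding the lifted cone construction: membership of $\cb^{A,\beta}$ in $\Po{A,\beta}$ is equivalent to $C_{A,\beta}(\cb^{A,\beta}_{|A}, x^{A,\beta}_\beta, \xb^{A,\beta}) \succcurlyeq 0$ together with the additional $2\times 2$ block \eqref{eq:retransformationScone}. The block analysis again delivers (1), (2), (3), (5); the additional block is equivalent to $|c^{A,\beta}_\beta| \le x^{A,\beta}_\beta$, which is exactly (4) written as a second-order inequality (the Euclidean norm of a scalar being the absolute value, so Lemma~\ref{lem:SDP-SOCP} reduces to this form).

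The only delicate point is bookkeeping: one must check that each $2 \times 2$ positive-semidefinite block translates via Lemma~\ref{lem:SDP-SOCP} into the claimed second-order cone inequality with the correct variable multiplicities appearing in (5). But the multiplicity count, namely $p_{A,\beta}$ copies of $c_\alpha$ for $\alpha\in A$ and $2^{\lceil \log_2(p_{A,\beta})\rceil} - p_{A,\beta}$ copies of the scaled $x^{A,\beta}_\beta$, is already fixed by the definition of the primal circuit matrix and carries over verbatim. Combining the per-circuit second-order descriptions with the linear constraint $\cb = \sum_{(A,\beta)} \cb^{A,\beta}$ then assembles a second-order representation of $C_{\mathcal{S}}(\mathcal{A},\mathcal{B})$ in standard form, proving the corollary.
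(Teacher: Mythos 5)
Your proposal is correct and follows essentially the same route as the paper: the authors also obtain this corollary by combining the reduced-circuit decomposition $\CS(\cA,\cB)=\sum_{(A,\beta)\in R(\cA,\cA)}\Pe{A,\beta}+\sum_{(A,\beta)\in R(\cA,\cB)}\Po{A,\beta}$ with the lifted Minkowski sum $\widehat{C}(\cA,\cB)$, Theorem~\ref{thm:projCirc} for even circuits, and part (1) of the odd-circuit corollary, then reading off the blocks via Lemma~\ref{lem:SDP-SOCP}. Your block-by-block bookkeeping matches the paper's intended argument.
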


As already mentioned in Section~\ref{se:prelim}, the SONC cone 
$C_{\sonc}(\mathcal{A})$ and its dual are always rational $\mathcal{S}$-cones
and thus occur as a special case of Corollaries~\ref{co:primal1} and
\ref{co:dual1}.

\begin{rem} 
The specific case of the primal SONC cone has also been studied in detail by
Magron and Wang \cite{wang-magron-second-order}. Their approach is based on different methods.
In particular, it
	relies on mediated sets and intermediately
	uses sums of squares representations. However, the resulting
	second-order programs are structurally similar. Notably, the
	dependence of the size of the second-order
	program on the parameter $p$ in our derivation relates to the
	dependency on the size of the rational mediated set in \cite{wang-magron-second-order}.
	Note also that various amendments are integrated into the
	approaches (such as the handling of denominators in
	\cite{wang-magron-second-order} and the use of extreme rays in
	our approach).
\end{rem}

\smallskip

\noindent
{\bf Acknowledgment.} We thank an anonymous referee for some beneficial
suggestions.

\section{Conclusion and open question}

We have provided second-order representations for primal and dual 
rational $\mathcal{S}$-cones. These statements remain valid 
also for non-rational sets $\mathcal{A}$, as long as all the relevant
barycentric coordinates are still rational. 
It is an open question whether an $\mathcal{S}$-cone and its dual are
also second-order representable in the general non-rational case.

Also, despite the use of the reduced circuits, the 
second-order representation of the $\mathcal{S}$-cone is still rather
large. It remains the question whether smaller
second-order representations for the $\mathcal{S}$-cone exist.

\bibliography{bibSoncSage_FULL}
\bibliographystyle{plain}

\end{document}